\theoremstyle{plain}
\theoremstyle{definition}
\newtheorem{defn}[theorem]{Definition}
\theoremstyle{remark}
\date{\today}
\title{Detection of a Moving Rigid Solid in a Perfect Fluid}
\author{Carlos Conca\footnote{C. Conca thanks the MICDB for partial support through Grant ICM P05-001-F, Fondap-Basal-Conicyt, and the French \& Chilean Governments through Ecos-Conicyt Grant C07E05.} \\{\small Center for Mathematical Modelling,}\\
{\small Av. Blanco Encalada 2120 Piso 7,}\\
{\small University of Chile, Santiago, Chile,}\\
 {\small \texttt{cconca@dim.uchile.cl}}
\and Muslim Malik\\
{\small Center for Mathematical Modelling,}\\
{\small Av. Blanco Encalada 2120 Piso 7,}\\
{\small University of Chile, Santiago, Chile,}\\
{\small \texttt{malikiisc@gmail.com}}
\and Alexandre Munnier\footnote{Author supported by ANR CISIFS and ANR GAOS.} \\
{\small Institut Elie Cartan UMR 7502, Nancy-Universit\'e,}\\
{\small CNRS, INRIA, B.P. 239,}\\
{\small F-54506 Vandoeuvre-l\`es-Nancy Cedex, France,}\\
{\small \texttt{alexandre.munnier@iecn.u-nancy.fr}}}
\begin{document}
\maketitle
\begin{abstract} 
In this paper, we consider a moving rigid solid immersed in a potential fluid. The fluid-solid system fills the whole two dimensional space and the fluid is assumed to be at rest at infinity. 
Our aim is to study the inverse problem, initially introduced in \cite{Conca:2008ab}, that consists in recovering the position and the velocity of the solid assuming that the potential function is known at a given time. 

We show that this problem is in general ill-posed by providing counterexamples for which the same potential corresponds to different positions and velocities of a same solid.
However, it is also possible to find solids having a specific shape, like ellipses for instance, for which  the problem of detection admits a unique solution. 

Using complex analysis, we prove that the well-posedness of the inverse problem is equivalent to the solvability of an infinite set of nonlinear equations. 
This result allows us to show that when the solid enjoys some symmetry properties, it can be {\it partially} detected. Besides, for any solid, the velocity can always be recovered when both the potential function and the position are supposed to be known.  

Finally, we prove that by performing continuous measurements of the fluid potential over a time interval, we can always track the position of the solid.
\end{abstract}


\section {\bf Introduction}
\subsection{History} 
Sonars are the most common device used to spot immersed bodies, like submarines or banks of fish. These systems use acoustic waves: active sonars emit acoustic waves (making themselves detectable), while passive sonars only listen (and therefore are only able to detect targets that are noisy enough). To overcome these limitations, it would be interesting to design systems imitating the {\it lateral line systems} of fish, a sense organ they use to detect movement and vibration in the surrounding water.

Most of the published results on inverse problems in Fluid Mechanics concern the detection of fixed immersed obstacles. For example in~\cite{Alvarez:2005aa} the authors prove that a fixed
smooth convex obstacle surrounded by a fluid governed
by the Navier-Stokes equations can be identified via a localized
boundary measurement of the velocity of the fluid and the Cauchy
forces. In~\cite{Doubova:2007aa}, the authors identify a single rigid obstacle immersed in a Navier-Stokes fluid by measuring 
both the gradient of the pressure and the velocity
of the fluid on one part of the boundary.  The distance from a given point to an obstacle is estimated in~\cite{Heck:2007aa} from boundary measurements for a fluid governed by the stationary Stokes equations.

To our knowledge, the only work addressing the detection of moving bodies is \cite{Conca:2008ab}. In this paper, the authors consider a single moving disk in an ideal fluid and prove that the position and velocity of the body can be deduced from one single measurement of the potential along some part of the exterior boundary of the fluid. They obtain linear stability results as well, by using shape
differentiation techniques.

\subsection{Problem settings}
\subsubsection*{Domains, frames, coordinates}
At a given time $t$, we assume that a rigid solid occupies the domain $\mathcal S\subset \mathbf R^2$, while the domain $\mathcal F:=\mathbf R^2\setminus\bar{\mathcal S}$ is filled by a perfect fluid.
Let us assume that $\mathcal S$ is a simply connected compact set. The unitary normal to $\partial\mathcal F$ directed towards the exterior of $\mathcal F$ is denoted by $\mathbf n$.  As being a rigid solid, $\mathcal S$ is the image by a rotation and a translation of a given reference domain $\mathcal S_0$ which will merely be called in the sequel the {\it shape} of the solid. Therefore, at any time, there exist an angle $\theta\in\mathbf R/2\pi$, a rotation matrix $R(\theta)\in {\rm SO(2)}$ of angle $\theta$, a point $\mathbf s:=(s_1,s_2)^T\in\mathbf R^2$ (the center of the rotation) and a vector $\mathbf r:=(r_1,r_2)^T$ of $\mathbf R^2$ such that $\mathcal S=R(\theta)(\mathcal S_0-\mathbf s)+\mathbf r$. We will be concerned with recovering the position of the solid, so it is worth remarking that the triplet $(\theta,\mathbf s,\mathbf r)$ is not unique. Two triplets $(\theta_j,\mathbf s_j,\mathbf r_j)\in \mathbf R/2\pi\times\mathbf R^2\times \mathbf R^2$ $(j=1,2)$ give the same position for any $\mathcal S_0$ if and only if $R(\theta_1)=R(\theta_2)=R$ and $R(\mathbf s_1-\mathbf s_2)=\mathbf r_1-\mathbf r_2$. These equalities define an equivalence relation in $\mathbf R/2\pi\times\mathbf R^2\times \mathbf R^2$. However, we want also to take into account the possible symmetries of the solid. So, given $\mathcal S_0$, we say that two triplets $(\theta_j,\mathbf s_j,\mathbf r_j)$ are equivalent when $R(\theta_1)(\mathcal S_0-\mathbf s_1)+\mathbf r_1=R(\theta_2)(\mathcal S_0-\mathbf s_2)+\mathbf r_2$. We denote by $\mathcal P$ the set of all of the equivalence class $\mathbf p$. We will make no difference in the notation between $\mathbf p$ and any element $(\theta,\mathbf s,\mathbf r)$ belonging to this class. In particular, we will write in short that for any $x\in\mathbf R^2$, $\mathbf px=R(\theta)(x-\mathbf s)+\mathbf r$. In the sequel, $\mathbf p$ will be merely referred to as {\it position} of the solid.

Later on, we will use tools of complex analysis, so rather than $\mathbf R^2$, we will sometimes identify the plane with the complex field $\mathbf C$. For any complex number $z:=z_1+iz_2$ ($i^2=-1$, $z_1,z_2\in\mathbf R$), we will denote $\bar z:=z_1-iz_2$ the conjugate of $z$ and $D$ will stand for the unitary disk of $\mathbf C$. 
\subsubsection*{Sequences of complex numbers}
For any sequence of complex numbers $c:=(c_k)_{k\in\mathbf Z}$, we can define $\bar c:=(\bar c_k)_{k\in\mathbf Z}$ and $\check c:=(\bar c_{-k})_{k\in\mathbf Z}$. For any two sequences $a:=(a_k)_{k\in\mathbf Z}$ and $b:=(b_k)_{k\in\mathbf Z}$, we recall the definition of the convolution product: $a\ast b:=(\sum_{j\in\mathbf Z} a_{k-j}b_j)_{k\in\mathbf Z}$. The convolution product can be iterated $n$ times ($n$ an integer) to obtain $a^n:=a\ast a\ast\ldots\ast a$. 

\subsubsection*{Rigid velocity}
The solid is moving. We denote by $\mathbf v(x):=(v_1(x),v_2(x))^T\in\mathbf R^2$ the rigid Eulerian velocity field defined for all $x\in\mathcal S$. This notation turns out to be $v(z):=v_1(z)+iv_2(z)$ in complex notation. It is well known in Solid Mechanics that  $\mathbf v$ can be decomposed into the sum of an instantaneous rotational velocity field and a translational velocity field. For any $x:=(x_1,x_2)^T\in\mathbf R^2$, we introduce the notation $x^\perp:=(-x_2,x_1)^T$ and we have $\mathbf v(x)=\omega(x-\mathbf s)^\perp+\mathbf w$, where $\mathbf s\in\mathbf R^2$ is the center of the instantaneous rotation, $\omega\in\mathbf R$ is the angular velocity and $\mathbf w:=(w_1,w_2)^T\in\mathbf R^2$ the translational velocity. 
Since we wish to recover these data, it is worth observing that the triplet $(\omega,\mathbf s,\mathbf w)\in\mathbf R\times\mathbf R^2\times\mathbf R^2$ is not unique. Both triplets $(\omega_j,\mathbf s_j,\mathbf w_j)$ ($j=1,2$) give the same rigid velocity field $\mathbf v$ if and only if $\omega_1=\omega_2=\omega$ and $\omega(\mathbf s_1^\perp-\mathbf s_2^\perp)=\mathbf w_1-\mathbf w_2$ (in particular, we can always choose for $\mathbf s$ any point of $\mathbf R^2$). This is an equivalence relation and the velocity $\mathbf v$ can be seen as an equivalence class. We denote $\mathcal V$ as the set of all of the equivalence class and we will not differentiate, in what follows, between the vector field, the class of equivalence and any element of this class. All of them will be denoted by $\mathbf v$.
\begin{defn}[Configurations]
For any given shape $\mathcal S_0$, we define a {\it configuration} as any position-velocity pair $(\mathbf p,\mathbf v)\in\mathcal P\times\mathcal V$. 
\end{defn} 
\subsubsection*{Fluid dynamics}
The dynamics of the fluid is described by means of its Eulerian velocity field $\mathbf u(x):=(u_1(x),u_2(x))^T$ defined for all $x\in\mathcal F$. Since the fluid is assumed to be perfect (i.e. incompressible and inviscid) and the flow irrotational, there exists a potential function $\varphi$, harmonic in $\mathcal F$, such that $\mathbf u(x)=\nabla\varphi(x)$ ($x\in\mathcal F$). The fluid is assumed to be at rest at infinity so we impose the asymptotic behavior $|\nabla \varphi(x)|\to 0$ as $|x|\to+\infty$. The classical {\it slip} boundary condition for inviscid fluid reads as $\mathbf u\cdot \mathbf n=\mathbf v\cdot \mathbf n$ on $\partial \mathcal S$ and yields a Neumann boundary condition for $\varphi$, namely $\partial_n\varphi =\mathbf v\cdot \mathbf n$ on $\partial\mathcal S$. Although the domain $\mathcal F$ is not simply connected, we can still consider $\psi$, the harmonic conjugate function to $\varphi$, because $\int_{\partial \mathcal S}\partial_n\varphi{\rm d}\sigma =0$. The functions $\varphi$ and $\psi$ satisfy the relation $\nabla\psi=(\nabla\varphi)^\perp$ in $\mathcal F$. In Fluid Mechanics, $\psi$ is called the stream function and the complex function $\xi=\varphi+i\psi$ is the holomorphic complex potential. As usual, we define $u:=u_1+iu_2=\bar\xi'$ as the complex fluid velocity. Observe that the complex potential, as being solution of a boundary value problem, depends on the domain $\mathcal F$ and the velocity $\mathbf v$ only. With the notation introduced earlier, we deduce that $\xi$ depends only on the shape $\mathcal S_0$ and the configuration $(\mathbf p,\mathbf v)\in\mathcal P\times\mathcal V$.

The complex potential is defined up to an additive constant which can be chosen such that $|\xi(z)|\to 0$ as $|z|\to+\infty$. For any $\nu\in\mathbf C$, the complex potential can be expanded in the form of a Laurent series:
\begin{equation}
\label{a:priori}
\xi(z):=\sum_{j\geq 1}\frac{\lambda_j(\nu)}{(z-\nu)^j},\quad |z-\nu|>R(\nu),
\end{equation}
where $\lambda_j(\nu)$ ($j\geq 1$) are complex numbers and $R(\nu):=\limsup_{j\to+\infty}|\lambda_j(\nu)|^{1/j}$. The series is uniformly convergent on the set $\{z\in\mathbf C\,:\, |z-\nu|>R(\nu)\}$.
\subsubsection*{Measurements}
We measure the complex velocity $u$ of the fluid in some open subset of $\mathcal F$. The Analytic Continuation theorem tells us that we can 
deduce the value of  $\xi'$ everywhere in the connected open set $\mathcal F$ and then also the value of $\xi$, up to an additive constant. In particular, we will assume that for all $\nu\in\mathbf C$, 
we can always evaluate all of the terms of the complex sequence $(\lambda(\nu))_{j\geq 1}$ arising in the expression \eqref{a:priori}.
\subsection{Main results}
\begin{defn}[Detectability]
A solid of shape $\mathcal S_0$ is said to be detectable if, for any configuration $(\mathbf p,\mathbf v)\in\mathcal P\times\mathcal V$, the knowledge of the potential holomorphic function $\xi$ suffices for recovering the pair  $(\mathbf p,\mathbf v)$.
\end{defn}
Observe that this definition makes the property of being detectable independent of the configuration:  detectability is a purely geometric property of the solid. Our first result is that not all the solids are detectable:
\begin{theorem}
\label{theo:1}
For any integer $n\geq 2$, there exists a holomorphic function $\xi$, a shape $\mathcal S_0$, and $n$ configurations $(\mathbf p_j,\mathbf v _j)\in\mathcal P\times\mathcal V$, $j=1,\ldots,n$  satisfying $\mathbf p_j\neq \mathbf p_k$ if $j\neq k$ such that $\xi$ is the potential of the fluid corresponding to the solid of shape $\mathcal S_0$ with any of the configurations $(\mathbf p_j,\mathbf v_j)$, $j=1,\ldots,n$.
\end{theorem}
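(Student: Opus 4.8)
The plan is to exploit the rotational covariance of the fluid--solid problem together with a potential that is invariant under a rotation of order $n$.

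First I would record the following covariance property. Identify $\mathbf R^2$ with $\mathbf C$, and the translational velocity $\mathbf w$ and rotation centre $\mathbf s$ with complex numbers $w,s$. If $\xi$ is the complex potential associated with a configuration of the solid and the whole configuration is rotated about the origin by an angle $\alpha$, then the new potential is $z\mapsto\xi(e^{-i\alpha}z)$. Indeed, under a rigid rotation the complex velocity transforms as $u(z)\mapsto e^{i\alpha}u(e^{-i\alpha}z)$; together with $u=\bar\xi'$ this gives $\xi'(z)\mapsto e^{-i\alpha}\xi'(e^{-i\alpha}z)$, and after integration the normalization $\xi(z)\to0$ at infinity fixes the additive constant to zero. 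I would then take as target potential the function $\xi(z):=z^{-n}$, which is holomorphic on $\mathbf C\setminus\{0\}$, vanishes at infinity, and satisfies $\xi(\rho z)=\xi(z)$ with $\rho:=e^{2\pi i/n}$, since $\rho^{n}=1$.

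The heart of the argument --- and the step I expect to be the main obstacle --- is to realize $\xi(z)=z^{-n}$ as the genuine potential of a single \emph{asymmetric} placement of some solid. Writing the slip condition $\partial_n\varphi=\mathbf v\cdot\mathbf n$ in terms of the stream function $\psi=\mathrm{Im}\,\xi$, the boundary $\partial\mathcal S$ of an admissible solid carrying the rigid velocity $\mathbf v=(\omega,s,w)$ must satisfy
\begin{equation}
\mathrm{Im}\,\xi(z)=\tfrac{\omega}{2}|z-s|^{2}-\mathrm{Im}(\bar w z)+c\qquad\text{on }\partial\mathcal S,
\end{equation}
for some real constant $c$. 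Hence it suffices to exhibit a level curve of
\begin{equation}
K(z):=\mathrm{Im}(z^{-n})-\tfrac{\omega}{2}|z-s|^{2}+\mathrm{Im}(\bar w z)
\end{equation}
that is a simple closed curve enclosing the origin. I would fix $\omega>0$ and study the level set $\{K=-M\}$ for $M$ large: since $z^{-n}\to0$ at infinity, on the scale $|z|\sim\sqrt{M}$ the dominant balance is $\tfrac{\omega}{2}|z-s|^{2}=M$, a circle of radius $\sqrt{2M/\omega}$ centred at $s$. Because $\nabla K\neq0$ on that circle for $M$ large (the gradient of the quadratic term is $-\omega(z-s)$, of size $\sim\omega\sqrt M$), the implicit function theorem yields a smooth simple closed component $\Gamma$ of $\{K=-M\}$, a small perturbation of this circle, enclosing both $s$ and the origin. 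Taking $\mathcal S_0$ to be the compact region bounded by $\Gamma$ and $\mathbf v_0:=(\omega,s,w)$, the boundary condition above holds with $c=-M$ by construction, while $z^{-n}$ is holomorphic outside $\Gamma$ (its pole at $0$ lies inside $\mathcal S_0$) and decays at infinity; by uniqueness of the exterior boundary value problem it is exactly the potential of this configuration.

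Finally I would generate the $n$ configurations by rotation. For $j=0,\dots,n-1$, let $\mathbf p_j$ be the position obtained by rotating $\mathcal S_0$ about the origin by $2\pi j/n$ and let $\mathbf v_j$ be the correspondingly rotated rigid velocity. By the covariance property the potential of $(\mathbf p_j,\mathbf v_j)$ is $z\mapsto\xi(\rho^{-j}z)=(\rho^{-j}z)^{-n}=z^{-n}$, the same for every $j$. It remains to ensure $\mathbf p_j\neq\mathbf p_k$ for $j\neq k$: since $\Gamma$ is a perturbation of a circle centred at $s\neq0$, its rotated copies $\rho^{j}\Gamma$ are perturbations of circles centred at the $n$ distinct points $\rho^{j}s$, hence are pairwise distinct curves once $M$ is large. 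Choosing $s\neq0$ therefore guarantees that the solid is not invariant under any $\rho^{j}$ with $1\le j\le n-1$, so the positions are pairwise distinct. This produces the required $n$ configurations $(\mathbf p_j,\mathbf v_j)$, $j=1,\dots,n$, all sharing the single holomorphic potential $\xi=z^{-n}$.
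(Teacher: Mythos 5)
Your proposal is correct and follows essentially the same route as the paper: the potential $\xi=z^{-n}$ (the paper uses $i z^{-n}$, i.e.\ $\psi=\cos(n\theta)r^{-n}$) is $n$-fold rotationally symmetric, the solid is carved out as a large level curve of $\mathrm{Im}\,\xi-\tfrac{\omega}{2}|z-s|^{2}+\mathrm{Im}(\bar w z)$ whose centre $s\neq 0$ breaks that symmetry, and the $n$ configurations are the rotated copies. The only cosmetic differences are that you phrase the multiplicity through a rotational covariance of the potential and allow a translational component $w$, whereas the paper takes $w=0$ and obtains the $n$ congruent level sets directly by placing the rotation centres at $\mathbf s_k=\rho\,e^{2i(k-1)\pi/n}$.
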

In other words, for any integer $n$, there exists at least one solid that can occupy $n$ different positions with $n$ different velocities and for which the fluid potential is the same.
This theorem shows that the result obtained in \cite{Conca:2008ab} for a disk can not be generalized to any solid. However, not only the disk is a detectable body:
\begin{prop}
\label{prop:ellipse}
Any ellipse is a detectable solid.
\end{prop}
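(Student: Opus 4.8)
The plan is to exploit the fact that, for an ellipse, the exterior fluid domain is the image of the exterior of the unit disk under an explicit two–term conformal map. Fix the known semi–axes $a\geq b>0$ of $\mathcal S_0$ and set $A=(a+b)/2$, $B=(a-b)/2$. For a configuration with centre $s=s_1+is_2$, orientation $\theta$, angular velocity $\omega$ and translational velocity $w=w_1+iw_2$, the fluid domain is $\Psi(\{|\eta|>1\})$ where $\Psi(\eta)=s+e^{i\theta}\bigl(A\eta+B\eta^{-1}\bigr)$, and I would pull the complex potential back to the $\eta$–plane, writing $\tilde\xi(\eta):=\xi(\Psi(\eta))=\sum_{j\geq1}\mu_j\eta^{-j}$, which is holomorphic for $|\eta|>1$ and vanishes at infinity.

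First I would transport the boundary condition. Writing the slip condition as $d\psi=\operatorname{Im}(\bar v\,dz)$ on $\partial\mathcal S$ with $v(z)=i\omega(z-s)+w$, and evaluating on $|\eta|=1$ (where $\eta^{-1}=\bar\eta$), the right–hand side becomes a trigonometric polynomial of degree two, because $|z-s|^2=A^2+B^2+AB(\eta^2+\eta^{-2})$ and $z-s=e^{i\theta}(A\eta+B\eta^{-1})$ each carry only low–order modes. Matching the Fourier coefficients of $\operatorname{Im}\tilde\xi$ then forces $\mu_j=0$ for every $j\geq3$ and yields the closed form $\tilde\xi(\eta)=\mu_1\eta^{-1}+\mu_2\eta^{-2}$ with $\mu_1=Be^{i\theta}\bar w-Ae^{-i\theta}w$ and $\mu_2=-i\omega AB$. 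This finiteness is exactly the special feature of the ellipse (for a general shape the map has infinitely many modes and $\tilde\xi$ is a genuine infinite series), and it is what makes the inverse problem tractable.

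Next I would recover the position geometrically. The map $\Psi$ has simple critical points at $\eta=\pm\sqrt{B/A}$, whose critical values are the two foci $f_\pm=s\pm c\,e^{i\theta}$ with $c=\sqrt{a^2-b^2}$. Since $\tilde\xi$ is holomorphic across $|\eta|=1$, the measured potential $\xi$ continues analytically and single–valuedly from $\mathcal F$ across $\partial\mathcal S$, and the only obstructions it meets are square–root branch points at $f_\pm$; these are genuine (non–removable) singularities whenever the configuration is non–trivial, because $\tilde\xi$ is not an even function about either critical point. Hence the unordered pair $\{f_+,f_-\}$ is intrinsic to $\xi$, and since $c$ is known I would read off $s=(f_++f_-)/2$ and $e^{i\theta}=\pm(f_+-f_-)/(2c)$; the sign ambiguity is precisely the invariance $\theta\mapsto\theta+\pi$ of the ellipse, so the position $\mathbf p$ is determined.

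Finally, with $\Psi$ now known I would extract $\mu_1,\mu_2$ from the data and invert the relations above: $\mu_2=-i\omega AB$ gives $\omega$, while pairing $\mu_1$ with $\bar\mu_1$ produces a linear system for $(w,\bar w)$ of determinant $A^2-B^2\neq0$ (the genuine–ellipse condition $a\neq b$), so $w$ is recovered uniquely, and one checks this is consistent under $\theta\mapsto\theta+\pi$, giving the same velocity field $\mathbf v$. The main obstacle I anticipate is the rigorous justification of the third step: showing that the analytic continuation of the measured potential is single–valued up to the foci and that both foci are genuine branch points, which requires a careful analysis of the two–sheeted structure of $\Psi^{-1}$, together with a separate treatment of the degenerate immobile case $\mathbf v=0$, for which $\xi\equiv0$ carries no positional information.
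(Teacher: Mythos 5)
Your proposal is correct and follows essentially the same route as the paper's proof: write the exterior map as the two--term Joukowski--type map, observe that the pulled-back potential reduces to $\mu_1\eta^{-1}+\mu_2\eta^{-2}$, locate the position (up to the $\theta\mapsto\theta+\pi$ symmetry) from the branch points at the foci $r\pm\sqrt{a^2-b^2}\,e^{i\alpha}$, and then solve the resulting linear relations (determinant $A^2-B^2\neq 0$) for $w$ and $\omega$. Your added care about showing the foci are genuine branch points for every nonzero configuration, and your flagging of the degenerate case $\mathbf v=0$ (where $\xi\equiv 0$ carries no positional information), are refinements the paper passes over silently.
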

Going back to the general case, it is easy to see that the holomorphic potential never admits an analytic continuation over the whole complex plane. Furthermore, for any analytic continuation of the potential inside the solid, we will prove that the location of the singularities provides clues allowing one in many cases to determine the position of the solid. This discussion is carried out in Subection~\ref{subsect:singul}.

According to Theorem~\ref{theo:1}, the problem of detection is ill-posed in the general case.  However, we claim that when the solid enjoys some symmetry properties, it can be {\it partially detected} (i.e. some but not all of the parameters among $\mathbf r,\alpha, \mathbf w,\omega$ can be deduced from the potential). The following proposition illustrates this idea:
\begin{prop}
If the shape of the solid is invariant under a rotation of angle $\pi/2$ then $\mathbf r$, $\mathbf w$ and $|\omega|$ can be deduced from the potential function.
\end{prop}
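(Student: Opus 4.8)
The plan is to read off $\mathbf r,\mathbf w,|\omega|$ from the first Laurent coefficients of $\xi$ in \eqref{a:priori}, after expressing $\xi$ through a conformal map and exploiting the symmetry. Place the reference shape so that its center of symmetry is the origin and let $\Phi_0$ map $\{|\zeta|>1\}$ onto $\mathcal F_0=\mathbf C\setminus\bar{\mathcal S}_0$ with $\Phi_0(\infty)=\infty$ and $\Phi_0(\zeta)=\beta\zeta+\sum_{k\geq 0}\beta_k\zeta^{-k}$, $\beta>0$. The hypothesis that $\mathcal S_0$ is invariant under $z\mapsto iz$ forces, by uniqueness of the normalized map, $\Phi_0(i\zeta)=i\Phi_0(\zeta)$, hence $\beta_k=0$ unless $k\equiv 3\pmod 4$; in particular $\beta_0=\beta_1=\beta_2=0$. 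For the solid in position $\mathbf p$ (rotation $\theta$, center at $\mathbf r\leftrightarrow r\in\mathbf C$) the exterior map is $\Phi(\zeta)=e^{i\theta}\Phi_0(\zeta)+r$.

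Next I would recast the Neumann condition $\partial_n\varphi=\mathbf v\cdot\mathbf n$ as a Dirichlet condition for the stream function, $\psi=\psi_b$ on $\partial\mathcal S$ up to an additive constant, where $\psi_b=\mathrm{Im}(w\bar z)+\tfrac{\omega}{2}|z-r|^2$ is the stream function of the rigid velocity $v(z)=i\omega(z-r)+w$. Pulling back by $\Phi$, the function $\Xi:=\xi\circ\Phi=\sum_{k\geq 1}c_k\zeta^{-k}$ solves a Schwarz problem on $\{|\zeta|>1\}$, whose coefficients are the Fourier coefficients of $\psi_b\circ\Phi$ on the unit circle. Using $\bar\zeta=\zeta^{-1}$ on $|\zeta|=1$ together with the sparsity of $\Phi_0$, a direct computation yields the key separation: translation feeds only $c_1=\beta\,w\,e^{-i\theta}$ and the indices $k\equiv 3\pmod 4$, rotation feeds only $k\equiv 0\pmod 4$ with $c_4=i\omega\,\overline{h_1}$ (where $h_1=\beta\bar\beta_3+\beta_3\bar\beta_7+\cdots$ is the $\zeta^4$-coefficient of $|\Phi_0|^2$), while $c_2=0$.

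Transporting back through $z=\Phi(\zeta)=\beta e^{i\theta}\zeta+r+O(\zeta^{-3})$ gives the expansion at infinity $\xi(z)=\mu_1 z^{-1}+\mu_2 z^{-2}+\cdots$ with
\[
\mu_1=c_1\beta e^{i\theta}=\beta^2 w,\qquad \mu_2=\mu_1\,r .
\]
The first identity is the crux for $\mathbf w$: because $\beta_1=\beta_2=0$, the anisotropic contribution (which for a general shape would be $\propto\bar w\,e^{2i\theta}$, i.e. the traceless part of the added-mass tensor) drops out, the dipole is isotropic, and $w=\mu_1/\beta^2$ is recovered irrespective of $\theta,r,\omega$. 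The second identity recovers $r=\mu_2/\mu_1$ whenever $w\neq 0$. When $w=0$ the leading term is $\mu_4\sim(z-r)^{-4}$, and $r$ is then located through the lowest nonvanishing coefficient (e.g. as a zero of $\nu\mapsto\lambda_6(\nu)$); uniformly, I would characterize $r$ as the unique point about which $\lambda_j(r)=0$ for every $j\equiv 2\pmod 4$, the vanishing being exactly the pull-back of the sparsity established above, with uniqueness reduced to the leading-order expansions just described.

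Finally, expanding $\xi$ about the now-known center $r$ gives $\lambda_j(r)=f_j e^{ij\theta}$, and by the mod-$4$ separation $f_4$ is \emph{purely rotational}, so $\lambda_4(r)=i\omega\,\overline{h_1}\,\beta^4 e^{4i\theta}$ and $|\omega|=|\lambda_4(r)|/(|h_1|\beta^4)$. The phase of $\lambda_4(r)$ only determines $\omega\,e^{4i\theta}$, and since a $\pi/2$-rotation of the solid leaves both the shape and $e^{4i\theta}$ unchanged, the orientation $\theta$ and the sign of $\omega$ genuinely cannot be separated, which is precisely why only $|\omega|$ is detectable. I expect this last step to be the main obstacle: it requires the nondegeneracy $h_1\neq 0$, which fails exactly when rotation produces no flow (the disk, with $\Phi_0(\zeta)=\beta\zeta$ and all $h_m=0$), so the recovery of $|\omega|$ must be stated for genuinely $\pi/2$-symmetric noncircular shapes; the pure-rotation sub-case of locating $\mathbf r$ is the other delicate point to pin down.
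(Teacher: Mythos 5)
Your argument is correct and is essentially the paper's own proof (Proposition~\ref{prop:first:detect}): both derive the mod-$4$ vanishing pattern of the coefficients from the $\pi/2$-symmetry (you via $\Phi_0(i\zeta)=i\Phi_0(\zeta)$ and the sparsity of the conformal map, the paper by comparing the expansions obtained from $f$ and $e^{i\pi/2}f$), and then read $r$, $w_0e^{i\alpha}$ and $|\omega|$ off the low-order Laurent coefficients, handling $w=0$ separately through the lowest nonvanishing coefficient. The only point to tidy is that the recovery of $|\omega|$ should use the smallest $m\equiv 0\ [4]$ with $\mathcal C_m\neq 0$ (which exists for every non-stealth solid) rather than requiring $\mathcal C_4\neq 0$, i.e.\ your $h_1\neq 0$ --- a caveat your closing remark already half-acknowledges.
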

We refer to Propositions~\ref{prop:first:detect} and \ref{prop:first:detect:2} for a more precise statement of this result.

In the general case, we can also try to determine less parameters with more information. For instance, we can prove:
\begin{prop}
For any solid with configuration $(\mathbf p,\mathbf v)\in\mathcal P\times\mathcal V$,
the knowledge of both the potential function and the position $\mathbf p$ suffices for recovering $\mathbf v$. 
\end{prop}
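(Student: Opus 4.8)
The plan is to reduce the recovery of the velocity $\mathbf v$ to a linear algebra problem once the position $\mathbf p$ is known. The key observation is that the complex potential $\xi$ is the solution of a linear boundary-value problem: $\xi=\varphi+i\psi$ is holomorphic in $\mathcal F$, decays at infinity, and on $\partial\mathcal S$ satisfies $\partial_n\varphi=\mathbf v\cdot\mathbf n$. Since $\mathbf v(x)=\omega(x-\mathbf s)^\perp+\mathbf w$ depends \emph{linearly} on the velocity parameters $(\omega,\mathbf w)$, the map $(\omega,\mathbf w)\mapsto\xi$ is linear. Concretely, I would fix once and for all the center $\mathbf s$ (using the freedom in the equivalence class $\mathcal V$, say $\mathbf s=\mathbf r$), and introduce the three elementary Kirchhoff potentials $\xi_1,\xi_2,\xi_3$ solving the same exterior Neumann problem on $\mathcal F$ with boundary data $\mathbf n\cdot\mathbf e_1$, $\mathbf n\cdot\mathbf e_2$, and $\mathbf n\cdot(x-\mathbf s)^\perp$ respectively. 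By linearity and uniqueness of the decaying harmonic function with prescribed Neumann data (guaranteed because $\int_{\partial\mathcal S}\partial_n\varphi\,\mathrm d\sigma=0$, as already noted in the excerpt), we get the decomposition
\begin{equation}
\label{eq:kirchhoff}
\xi=w_1\,\xi_1+w_2\,\xi_2+\omega\,\xi_3.
\end{equation}

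The crucial point is that the three functions $\xi_1,\xi_2,\xi_3$ depend \emph{only} on the domain $\mathcal F$, that is, only on the shape $\mathcal S_0$ and the position $\mathbf p$, both of which are assumed known. Hence they are computable, and \eqref{eq:kirchhoff} exhibits the measured potential $\xi$ as a known linear combination of three known holomorphic functions with unknown real coefficients $(w_1,w_2,\omega)$. Recovering $\mathbf v$ then amounts to extracting these three coefficients. The simplest way is to examine the leading-order behavior at infinity: each $\xi_k$ admits a Laurent expansion of the form \eqref{a:priori} about any point $\nu$, and in particular the first nontrivial coefficients $\lambda_1^{(k)}$ (the $1/(z-\nu)$ terms, i.e. the complex dipole moments) are determined by the shape and position. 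Comparing the coefficient $\lambda_1(\nu)$ of the measured $\xi$ against $w_1\lambda_1^{(1)}+w_2\lambda_1^{(2)}+\omega\lambda_1^{(3)}$ produces linear scalar equations in the three real unknowns; taking real and imaginary parts of $\lambda_1$, and if necessary the next coefficient $\lambda_2$, yields enough equations to solve.

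Thus the whole statement rests on showing that the resulting linear system is \emph{invertible}, equivalently that $\xi_1,\xi_2,\xi_3$ are linearly independent over $\mathbf R$. This I expect to be the main obstacle. Linear independence should follow from the uniqueness of the Neumann problem together with the linear independence of the boundary data $\mathbf n\cdot\mathbf e_1,\ \mathbf n\cdot\mathbf e_2,\ \mathbf n\cdot(x-\mathbf s)^\perp$ as functions on $\partial\mathcal S$: a nontrivial relation $w_1\xi_1+w_2\xi_2+\omega\xi_3\equiv 0$ in $\mathcal F$ would force, by taking normal derivatives on $\partial\mathcal S$, the vanishing of the rigid field $\omega(x-\mathbf s)^\perp+\mathbf w$ in the normal direction all along $\partial\mathcal S$, which in turn forces $(\omega,\mathbf w)=0$ provided $\partial\mathcal S$ is not a single point. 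Care is needed here because a rigid motion can have zero normal trace on special boundaries (for instance a pure rotation about the center of a disk is tangential everywhere on the circle), so the argument must account for the residual freedom already built into the equivalence class $\mathcal V$ — this is precisely why $\mathbf s$ can be normalized, and why the genuinely recoverable object is the class of $\mathbf v$ rather than an individual triplet $(\omega,\mathbf s,\mathbf w)$. Once independence modulo this freedom is established, inverting the $3\times3$ (or reduced) system gives $\mathbf v$ uniquely and completes the proof.
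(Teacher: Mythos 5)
Your overall strategy --- decompose $\xi$ into the three Kirchhoff elementary potentials $\xi_1,\xi_2,\xi_3$ determined by the (known) shape and position, and reduce the recovery of $(\omega,\mathbf w)$ to the $\mathbf R$-linear independence of these three functions --- is essentially the same reduction the paper performs, only phrased in physical space rather than on Laurent coefficients: the sequences $\mathcal G^1,\mathcal G^2,\mathcal G^3$ in the paper's proof of Proposition~\ref{prop:velo} are precisely the coefficient encodings of your $\xi_1,\xi_2,\xi_3$, and the paper likewise concludes by showing that these are $\mathbf R$-linearly independent, i.e.\ that $\mathcal G_N$ has rank $3$ for $N$ large enough.

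The gap is in the independence step, which is the entire content of the proposition. Your claim that a relation $w_1\xi_1+w_2\xi_2+\omega\xi_3\equiv 0$ forces $(\omega,\mathbf w)=0$ ``provided $\partial\mathcal S$ is not a single point'' is false: for a disk, the rotation about its center has zero normal trace on the whole boundary, so $\xi_3\equiv 0$ and the three potentials are dependent. You notice this yourself, but your proposed repair --- absorbing the failure into the equivalence class $\mathcal V$ via the normalization of $\mathbf s$ --- does not work. The equivalence relation on $\mathcal V$ only identifies different triplets $(\omega,\mathbf s,\mathbf w)$ representing the \emph{same} vector field (in particular it requires $\omega_1=\omega_2$); a rotating disk and a stationary disk are genuinely distinct classes in $\mathcal V$ that produce the same (zero) potential, so $\mathbf v$ is simply not recoverable there, and the statement read literally for ``any solid'' fails. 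The correct fix is the one the paper adopts: the precise version, Proposition~\ref{prop:velo}, excludes the solids of Theorem~\ref{theo:stelth}, and the independence of $\xi_1,\xi_2,\xi_3$ is \emph{equivalent} to the nonexistence of a nonzero rigid field with vanishing normal trace on $\partial\mathcal S$, i.e.\ to the solid not admitting a stealth motion. That characterization (disks rotating about their centers; arcs and segments moved tangentially) is a theorem in its own right and is exactly the input your argument is missing; without invoking it, your ``once independence \ldots\ is established'' leaves the decisive step unproven.
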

Finally, we can also measure the potential function, not only at a given instant, but over a time interval. In this case, we obtain:
\begin{theorem}[Tracking]
\label{theo:tracking}
For any solid $\mathcal S_0$, if we know its position at the time $t=0$ and we perform continuous measurements of the complex potential over the time interval $[0,T]$ for some $T>0$ then we can deduce the configuration of the solid at any time $t\in[0,T]$.
\end{theorem}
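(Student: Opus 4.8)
The plan is to reduce the tracking problem to a Cauchy problem for an ordinary differential equation on the position, whose right-hand side is reconstructed pointwise in time from the measured potential. Two ingredients are combined: (i) the already-established proposition that, at a fixed time, the velocity $\mathbf v$ can be recovered from the pair consisting of the potential $\xi$ and the position $\mathbf p$; and (ii) the purely kinematic identity expressing that the rigid Eulerian velocity field is exactly the time derivative of the rigid motion. Chaining ``position determines velocity'' with ``velocity is the derivative of position'' yields a closed evolution equation for $\mathbf p(t)$ which, together with the known initial position $\mathbf p(0)$, pins down the whole trajectory via Cauchy--Lipschitz.

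First I would make the kinematic link explicit. Writing a material point of the solid as $X(t)=R(\theta(t))(y-\mathbf s)+\mathbf r(t)$ for $y$ in the reference shape $\mathcal S_0$, differentiation gives $\dot X(t)=\dot\theta(t)\,(X(t)-\mathbf r(t))^\perp+\dot{\mathbf r}(t)$, where I used $R'(\theta)=R(\theta)J$ with $Jx=x^\perp$ together with the commutation $R(\theta)J=JR(\theta)$ of planar rotations. Comparing with the decomposition $\mathbf v(x)=\omega(x-\mathbf s)^\perp+\mathbf w$, and exploiting the freedom to place the center of the instantaneous rotation at $\mathbf s=\mathbf r(t)$, one reads off $\omega(t)=\dot\theta(t)$ and $\mathbf w(t)=\dot{\mathbf r}(t)$. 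Thus in the coordinates $(\theta,\mathbf r)$ the velocity $\mathbf v(t)$ is precisely $\dot{\mathbf p}(t)$, so recovering $\mathbf v$ amounts to recovering $\dot{\mathbf p}$.

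Next I would assemble the evolution equation. Let $\Phi$ denote the reconstruction map furnished by the earlier proposition, so that $\mathbf v=\Phi(\mathbf p,\xi)$ whenever $\xi$ is the potential associated with the position $\mathbf p$. Since continuous measurements provide $t\mapsto\xi(\cdot,t)$ on $[0,T]$, the true trajectory solves the non-autonomous system $\dot\theta(t)=\omega(t)$, $\dot{\mathbf r}(t)=\mathbf w(t)$ with $(\omega(t),\mathbf w(t))=\Phi(\mathbf p(t),\xi(\cdot,t))=:F(t,\mathbf p(t))$, subject to the prescribed datum $\mathbf p(0)$. If two configurations consistent with the same measured potential shared the initial position, their positions would both satisfy this Cauchy problem; by uniqueness they would coincide on $[0,T]$, and then so would their velocities through the kinematic identity. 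This simultaneously reconstructs the trajectory (integrate $F$ forward from $\mathbf p(0)$) and establishes its uniqueness, which is exactly the claimed assertion.

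The crux is verifying the hypotheses of Cauchy--Lipschitz, namely that $F$ is continuous in $t$ and locally Lipschitz in $\mathbf p$. Continuity in $t$ follows from the continuity of the measurements. The delicate point is the Lipschitz dependence of $\Phi$ on the position, which is a statement about the smooth, indeed real-analytic, dependence of the fluid potential, and of the reconstruction procedure, on the rigid displacement of the solid. I would extract it from the explicit dependence of the Laurent coefficients $\lambda_j(\nu)$ on $(\theta,\mathbf r)$ afforded by the complex-analytic representation underlying the velocity-recovery proposition, using the regularity of the exterior Neumann problem under rigid motions of the obstacle to supply the required estimates. This regularity is the main obstacle; once it is secured, the tracking conclusion is a direct application of the uniqueness part of the Picard--Lindel\"of theorem.
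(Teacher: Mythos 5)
Your proposal follows essentially the same route as the paper's proof: recover the velocity from the measured potential and the current position (via the rank-$3$ operator $\mathcal G_N$ and the inverted system for $\Lambda_N(t,\nu)$), identify that velocity with $\frac{d}{dt}(r(t),\alpha(t))$, and close the loop as a Cauchy problem solved by the Cauchy--Lipschitz theorem from the known initial position. Your extra attention to the local Lipschitz continuity of the reconstruction map in $\mathbf p$ is a point the paper itself passes over silently, so it is a welcome refinement rather than a divergence.
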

\subsection{Outline of the paper}
In the next section, we provide examples of non-detectable solids and prove Theorem~\ref{theo:1}. In Section~\ref{complex:poten}, we derive the expression of the complex potential. In Section~\ref{sec:stealth}, we determine all the {\it stealth} solids, i.e. all the solids that can move in the fluid without disturbing it. The detection of a moving ellipse is discussed in Section~\ref{detection:ellipse}. Section~\ref{singu} is split into three parts: the first one is dedicated to the study of the singularities of the potential function and the second one to its asymptotic expansion and how these results can be used for the detection problem we are dealing with. The third subsection deals with an example of detection. In Section~\ref{tracking} we give the proof of Theorem~\ref{theo:tracking} and at last in Section~\ref{open}, we indicate some remaining open problems.

\section{Examples of Non-detectable Solids}
\label{sec:contre_exx}
This Section is mostly devoted to the proof of Theorem~\ref{theo:1}. 
\subsubsection*{Expression of the stream function}
Let a shape $\mathcal S_0$ and a configuration $(\mathbf p,\mathbf v)$ be given with $\mathbf v=\omega(x-\mathbf s)^\perp+\mathbf w$ (for some real number $\omega$ and some vector $\mathbf s$) and remember that $\mathcal S=\mathbf p(\mathcal S_0)$. Then, let us introduce $\gamma:[0,\ell[\mapsto \gamma(s)=(\gamma_1(s),\gamma_2(s))^T\in\mathbf R^2$ a parameterization of $\partial\mathcal S$ satisfying  $|\gamma'(s)|=1$ for all $s\in[0,\ell[$ ($\ell>0$). We assume that $\partial\mathcal S$ is described positively (counterclockwise parameterization), we denote $\boldsymbol\tau=\gamma'$ (the unitary tangent vector to $\partial\mathcal S$) and we get $\mathbf n=\boldsymbol\tau^\perp$. We deduce that $\partial_n\varphi=-\partial_\tau\psi$ and hence that $\partial_\tau\psi(\gamma)=-w_1\gamma_2'+w_2\gamma_1'+\omega\gamma'\cdot(\gamma-\mathbf s)$. We can integrate along $\partial\mathcal S$ to obtain $\psi(\gamma)=-w_1\gamma_2+w_2\gamma_1+(\omega/2)|\gamma-\mathbf s|^2+C$ on $\partial\mathcal S$, where $C$ is real constant. This Dirichlet boundary condition for the stream function reads also: $\psi(x)=-w_1x_2+w_2x_1+(\omega/2)|x-\mathbf s|^2+C$ on $\partial\mathcal S$. In this form, the boundary of the solid turns out to be a level set of the function $g(x):=(\omega/2)|x-\mathbf s|^2-w_1x_2+w_2x_1-\psi(x)$, an observation we will now take advantage of.

\subsubsection*{Proof of Theorem~\ref{theo:1}}
Pick some integer $n\geq 2$ and consider the harmonic function whose expression in polar coordinates is $\psi(r,\theta):=\cos(n\theta)r^{-n}$. 
Since, in Cartesian coordinates, $|(\partial\psi/\partial x_j)(x)|\leq |\nabla\psi(x)|=n|x|^{-n-1}$ ($j=1,2$), we deduce that for any $\omega>0$ and $\mathbf s\in\mathbf R^2$, there exists $\delta>0$ such that $(\partial\psi/\partial x_1)(x)-\omega(x_1-s_1)$ and $(\partial\psi/\partial x_2)(x)-\omega(x_2-s_2)$ can not be simultaneously null providing $|x-\mathbf s|>\delta$. Applying the Local Inversion Theorem, we deduce that for any $\lambda\in\mathbf R$, the solutions of 
\begin{equation}
\frac{\omega}{2}|x-\mathbf s|^2-\psi(x)-\lambda=0,
\end{equation}
satisfying $|x-\mathbf s|>\delta$ (if any) are locally smooth curves. 

From the estimate $|\psi(x)|\leq|x|^{-n}$ ($x\in\mathbf R^2$), we deduce that for all $\mathbf s:=(s_1,s_2)^T$ and all $\omega>\varepsilon>0$, there exists $\delta'>0$ such that:
$$\frac{\omega-\varepsilon}{2}|x-\mathbf s|^2-\lambda\leq \frac{\omega}{2}|x-\mathbf s|^2-\psi(x)-\lambda\leq \frac{\omega+\varepsilon}{2}|x-\mathbf s|^2-\lambda,$$
for all $\lambda\in\mathbf R$ providing $|x-\mathbf s|\geq \delta'$. If we choose for instance $\lambda>\max(\delta^{\prime},\delta)^2(\omega+\varepsilon)$, there is a zero level set of the function $g(x):=\omega|x-\mathbf s|^2/2-\psi(x)-\lambda$ between the circles $|x-\mathbf s|=\sqrt{2\lambda}/\sqrt{\omega-\varepsilon}$ and $|x-\mathbf s|=\sqrt{2\lambda}/\sqrt{\omega+\varepsilon}$ (because $\sqrt{2\lambda}/\sqrt{\omega+\varepsilon}\geq\sqrt{2}\delta>\delta$).  It remains to choose $\mathbf s$ properly, in order to take advantage of the symmetry of the function $\psi$. Let $\rho$ be any positive number and denote $\mathcal S_1$ the zero level set of $g$ obtained as described above by specifying $\mathbf s_1:=(\rho,0)$. This level set defines the smooth boundary of a solid for which $\psi$ is the stream function (and $\xi(z):=i/z^n$ the holomorphic potential) associated with the velocity $\mathbf v_1:=\omega(x-\mathbf s_1)^\perp$. By choosing next $\mathbf s_k=(\rho\cos(2(k-1)\pi/n),\rho\sin(2(k-1)\pi/n))^T$ for $k=2,\ldots,n$, we obtained $n-1$ copies of $\mathcal S_1$ at $n-1$ different positions with respective velocities $\mathbf v_k:=\omega(x-\mathbf s_k)^\perp$. Some examples of such solids with the associated rigid velocity fields are displayed in Figures~\ref{fig:1}, \ref{fig:2} and \ref{fig:3}.
\begin{figure}[h]
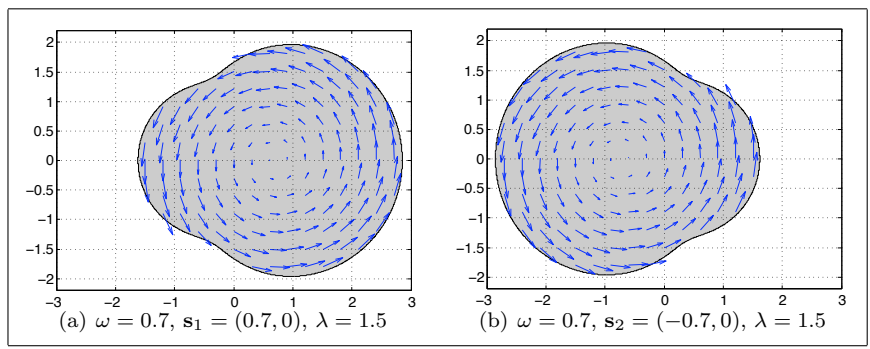
     
     \centering
     \begin{tabular}{|cc|}
     \hline
     \subfigure
     [$\omega=0.7$, $\mathbf s_1=(0.7,0)$, $\lambda=1.5$]
     {\includegraphics[width=.35\textwidth]{first_ex1.pdf}}
     &
     \subfigure
     [$\omega=0.7$, $\mathbf s_2=(-0.7,0)$, $\lambda=1.5$]
     {\includegraphics[width=.35\textwidth]{first_ex2.pdf}}\\
     \hline
     \end{tabular}
     \caption{\label{fig:1}For both configurations, the stream function is the same. It reads $\psi(r,\theta):=\cos(2\theta)/r^2$ in polar coordinates. The holomorphic potential is $\xi(z):=i/z^2$.}
     \end{figure}
\begin{figure}[h]
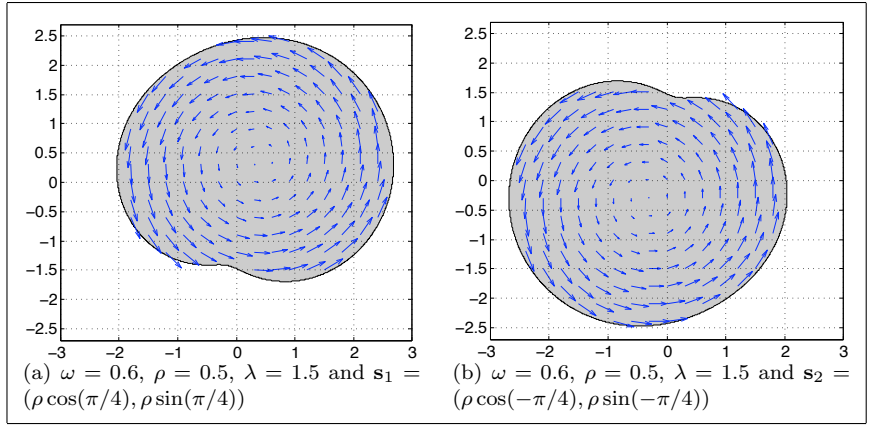
     
    \centering
     \begin{tabular}{|cc|}
     \hline
     \subfigure
          [$\omega=0.6$, $\rho=0.5$, $\lambda=1.5$ and $\mathbf s_1=(\rho\cos(\pi/4),\rho\sin(\pi/4))$]
     {\includegraphics[width=.35\textwidth]{first_ex3.pdf}}
     &
     \subfigure
     [$\omega=0.6$, $\rho=0.5$, $\lambda=1.5$ and $\mathbf s_2=(\rho\cos(-\pi/4),\rho\sin(-\pi/4))$]
     {\includegraphics[width=.35\textwidth]{first_ex4.pdf}}\\
     \hline
     \end{tabular}
     \caption{\label{fig:2}For both configurations, the stream function and the holomorphic potential are the same as in Figure~\ref{fig:1}.}
     \end{figure}



\begin{figure}[h]
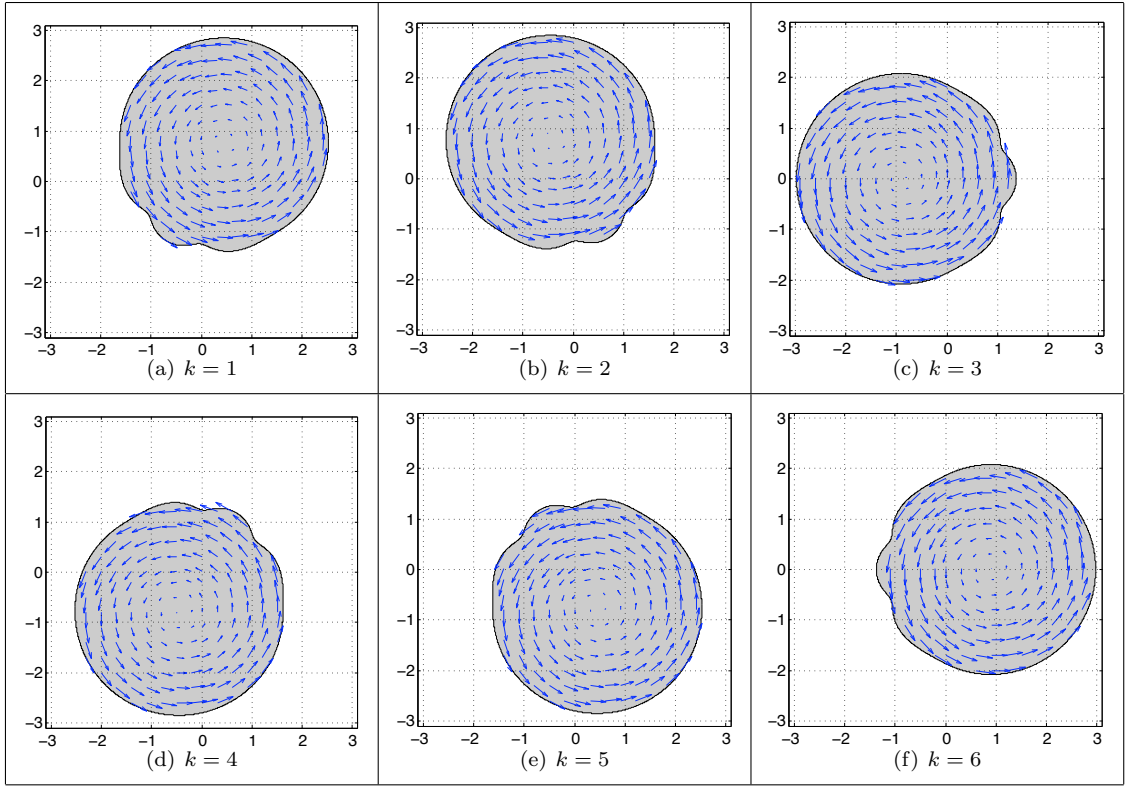
     
     \centering
     \begin{tabular}{|c|c|c|}
     \hline
     \subfigure
          [$k=1$]
     {\includegraphics[width=.30\textwidth]{second_ex1.pdf}}
     &
     \subfigure
     [$k=2$]
     {\includegraphics[width=.30\textwidth]{second_ex2.pdf}}
     &
      \subfigure
     [$k=3$]
     {\includegraphics[width=.30\textwidth]{second_ex3.pdf}}\\
     \hline
     \subfigure
     [$k=4$]
     {\includegraphics[width=.30\textwidth]{second_ex4.pdf}}
     &
     \subfigure
     [$k=5$]
      {\includegraphics[width=.30\textwidth]{second_ex5.pdf}}
     &
     \subfigure
     [$k=6$]
     {\includegraphics[width=.30\textwidth]{second_ex6.pdf}}\\
     \hline
\end{tabular}
\caption{\label{fig:3}The stream function is  $\psi(r,\theta)=\cos(6\theta)/r^6$, the holomorphic potential is $\xi=i/z^6$ and $\omega = 0.7$,
$\rho=0.9$, $\lambda = -2.5$ and 
$\mathbf s_1=(\rho\cos(k\pi/6),\rho\sin(2k\pi/6))$ for $k=1,\ldots,6$.}
\end{figure}

\section{The Complex Potential}
\label{complex:poten}
Before going further, we need to describe the shape $\mathcal S_0$. Actually, for convenience, rather than $\mathcal S_0$ we shall describe $\mathcal F_0:=\mathbf C\setminus \bar{\mathcal S}_0$. Thus, assume that $\mathcal F_0$ is the image by a conformal mapping $f$ of $\Omega:=\mathbf C\setminus \bar D$, the exterior of the unitary disk. For any simply connected shape $\mathcal S_0$ and corresponding domain $\mathcal F_0$, the Riemann Mapping Theorem tells us that $f$ can be written in the form:
\begin{equation}
\label{general:form}
f(z)=c_1 z+c_0+\sum_{k\leq -1}c_k z^k,\quad(z\in\Omega),
\end{equation}
where $c_k\in\mathbf C$ for $k=1$ and all $k\leq -1$ and $c_1\neq 0$. We can assume, without loss of generality, that $c_0=0$. To simplify forthcoming computations, we will also assume that $c_k$ is actually defined for all $k\in\mathbf Z$ and that $c_k=0$ for $k=0$ and $k\geq 2$. We denote $c:=(c_k)_{k\in\mathbf Z}$ the complex sequence of elements $c_k$ and the {\it Area Theorem} (see \cite[Theorem 14.13]{Rudin:1987aa}) tells us that the area of $\mathcal S_0$ is equal to $\pi\sum_{k\leq 1}k|c_k|^2$. Since $\mathcal S_0$ is of finite extent, it means that this sum has to be finite. Actually, we will assume also that $c\in\ell^1(\mathbf C)$, which entails in particular that $f$ is continuous in the closed set $\bar\Omega$.

Such a description allows us to consider a broad set of solids. In particular, the boundary of the solid can be very rough. Degenerate cases  can be considered as well (for instance $\mathcal S_0$ can be a segment modeling a one dimensional beam). 

For any position $\mathbf p:=(R(\alpha),0,\mathbf r)$, we recall that $\mathcal S:=R(\alpha)\mathcal S_0+\mathbf r$ is the actual domain occupied by the solid. Let us introduce then the functions  $\varphi_0(x):=\varphi(R(\alpha)x+\mathbf r)$ and $\psi_0(x):=\psi(R(\alpha)x+\mathbf r)$ which are harmonic (and defined) over the fixed domain $\mathcal F_0$. For any velocity $\mathbf v:=(\omega,\mathbf r,\mathbf w)$ (we choose here $\mathbf s=\mathbf r$), the Dirichlet boundary condition for $\psi$ turns out to be, in complex notation $2i\psi_0:=w_0\bar z-\bar w_0 z+i\omega|z|^2$ where $w_0(z):=w(R(\alpha)z+\mathbf r)$.
We introduce $\zeta$ the holomorphic complex potential of the fluid defined for any $z\in\Omega$ by $\zeta(z)=\varphi_0(f(z))+i\psi_0(f(z))$.
Since $\bar z=1/z$ on $\partial\Omega$, we get the identity $2i\psi_0(f(z))=-\bar w f(z)+w\bar f(1/z)+i\omega f(z)\bar f(1/z)$. For any $z\in\partial\Omega$, we have also
$\bar f(1/z)=\sum_{k\in{\mathbf Z}}\bar c_{-k}z^k=\sum_{k\in{\mathbf Z}}\check c_{k}z^k$ and 
$f(z)\bar f(1/z)=\sum_{k\in{\mathbf Z}}(\check c\ast c)_k z^k$.
So we get:
\begin{equation}
\label{above}
2i\psi_0(f(z))=\sum_{k\in{\mathbf Z}}[-\bar w_0c_k+w_0\check c_{k}+i\omega(\check c\ast c)_k] z^k,\quad(z\in\partial\Omega).
\end{equation}
According to \cite[Chap. IX, \S 9.63]{Milne-Thomson:1960aa}, we keep only the negative powers in \eqref{above} to get the expression of $\zeta$. Defining the coefficients $\zeta_k(w_0,\omega):=[-\bar w_0c_k+w_0\check c_{k}+i\omega(\check c\ast c)_k]$ for all $k\leq -1$, we obtain:
\begin{equation}
\label{potential}
\zeta(z)=\sum_{k\leq -1} \zeta_k(w_0,\omega) z^k,\quad(z\in\Omega).
\end{equation}
Eventually, the expression of the measured complex potential,  defined in $\mathcal F$, is:
\begin{equation}
\xi(z)=\zeta(f^{-1}((z-r)e^{-i\alpha})),\quad(z\in\mathcal F).
\end{equation}
According to our rule of notation, we introduce as well 
\begin{equation}
\xi_0(z)=\varphi_0(z)+\psi_0(z)=\zeta(f^{-1}(z)),\quad(z\in\mathcal F_0).
\end{equation}
\section{Stealth Rigid Solids}
\label{sec:stealth}
In this Section we wish to determine all the possible shapes and configurations of solids for which the complex potential $\xi$ is identically null. Such a displacement will be termed {\it stealth}.
\begin{theorem}
\label{theo:stelth}
The only solids $\mathcal S$ that can undergo stealth motions in a fluid are:
\begin{itemize}
\item Disks rotating about their centers;
\item Arc of circles and segments with velocity field everywhere tangent to $\mathcal S$. 
\end{itemize}
\end{theorem}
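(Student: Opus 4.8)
The plan is to convert the analytic stealth condition into a geometric one and then classify the curves it permits. By definition the motion is stealth precisely when $\xi$, hence $\zeta$, is constant; since $\zeta$ in \eqref{potential} has only negative powers of $z$, this is the same as $\zeta\equiv 0$, i.e. $\zeta_k(w_0,\omega)=0$ for every $k\le -1$. I prefer, however, to read the condition off the fluid itself: $\xi$ constant means $u=\bar\xi'\equiv 0$, so the fluid is globally at rest. Because $\partial_n\varphi=\mathbf v\cdot\mathbf n$ on $\partial\mathcal S$ and $\varphi$ is harmonic in $\mathcal F$ with $|\nabla\varphi|\to 0$ at infinity, the exterior Neumann problem determines $\varphi$ up to an additive constant; hence $\xi$ is constant if and only if $\mathbf v\cdot\mathbf n=0$ on $\partial\mathcal S$. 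In other words, stealth is equivalent to the rigid velocity field $\mathbf v$ being everywhere tangent to $\partial\mathcal S$.

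Next I would describe the integral curves of a rigid velocity field $\mathbf v(x)=\omega(x-\mathbf s)^\perp+\mathbf w$. If $\omega\neq 0$, one absorbs $\mathbf w$ into the rotation and writes $\mathbf v(x)=\omega(x-\mathbf c)^\perp$ for a unique center $\mathbf c$; its flow lines are the concentric circles about $\mathbf c$. If $\omega=0$, then $\mathbf v\equiv\mathbf w$ and its flow lines are the straight lines parallel to $\mathbf w$. Wherever $\mathbf v$ does not vanish, a connected curve tangent to $\mathbf v$ at every point is locally an integral curve and therefore sits inside a single flow line: an arc of a circle centered at $\mathbf c$ when $\omega\neq 0$, or a segment of a line parallel to $\mathbf w$ when $\omega=0$.

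Applying this to the closed curve $\partial\mathcal S$ yields the two families. If $\omega\neq 0$, then $\partial\mathcal S$ lies on a circle centered at $\mathbf c$; when $\mathcal S$ has nonempty interior its boundary is a Jordan curve contained in that circle, which forces it to be the whole circle, so $\mathcal S$ is the disk centered at $\mathbf c$---a disk rotating about its own center---while in the degenerate case $\mathcal S$ reduces to an arc of the circle. If $\omega=0$, then $\partial\mathcal S$ lies on a line and $\mathcal S$ is a segment parallel to $\mathbf w$. For the converse, each of these solids plainly admits a rigid velocity tangent to its boundary (rotation of a disk or an arc about the circle's center, translation of a segment along its direction), so by the equivalence above it is stealth; this reproduces exactly the list in the statement.

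The step I expect to be most delicate is the treatment of the degenerate solids. For an arc or a segment, $\mathcal S$ has empty interior and $\partial\mathcal S$ is a two-sided slit, so I must verify that $\mathcal F=\mathbf C\setminus\bar{\mathcal S}$ is still connected, that the uniqueness of the exterior Neumann problem (with the decay condition and zero net flux) continues to hold, and that the conformal description of Section~\ref{complex:poten} accommodates these shapes, for instance through the Joukowski-type map $f(z)=c_1z+c_{-1}z^{-1}$, which produces segments and circular arcs. A minor but essential supporting fact is that a Jordan curve contained in a circle must be the entire circle, which is what excludes a genuinely two-dimensional stealth solid other than the disk.
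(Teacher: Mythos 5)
Your proof is correct, and while it lands on the same central geometric fact as the paper---that the boundary must lie on a single level set (flow line) of the rigid velocity field, hence on a circle when $\omega\neq 0$ or a line when $\omega=0$---you reach it and finish from it by a genuinely different route. The paper never invokes uniqueness for the exterior Neumann problem: it reads $\zeta\equiv 0$ directly as the boundary identity $-2\Im(\bar w_0 f(z))+\omega|f(z)|^2=0$ on $\partial\Omega$, concludes that $f(\partial\Omega)$ is a connected compact subset of an explicit circle, and then classifies the admissible conformal maps coefficient by coefficient (the full circle forcing $c_k=0$ for $k\neq 1$ and then $w_0=0$; the arc via $f(z)=z+(1-h^2)/(z+ih)$, for which the vanishing of all $\zeta_k$ collapses to the single relation $w_0=\omega(h-1/h)$, i.e.\ tangency; the segment via the Joukowski-type map). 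Your version buys a cleaner logical structure---the equivalence ``stealth $\Leftrightarrow$ $\mathbf v\cdot\mathbf n=0$ on $\partial\mathcal S$'' makes the converse direction immediate, whereas the paper has to extract the velocity constraints from the coefficient equations---but it pays in two places. First, your tangency/integral-curve step presupposes a boundary regular enough to carry a pointwise normal and to be locally an integral curve; the paper's formulation as a level set of $g(x)=(\omega/2)|x-\mathbf s|^2-w_1x_2+w_2x_1$ (equivalently, constancy of the rigid stream function on $\partial\mathcal S$) needs only continuity and is the more robust phrasing given that the paper explicitly allows rough boundaries. You should also note explicitly that when $\omega\neq 0$ the single zero of $\mathbf v$ at the center $\mathbf c$ cannot lie on $\partial\mathcal S$ (points of the boundary near it would violate $|x-\mathbf c|=r$), so the flow-line containment does apply to the whole boundary. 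Second, the well-posedness of the Neumann problem on slit domains, which you correctly flag as the delicate point, is exactly what the paper's explicit conformal maps dispose of; your argument is only complete once that verification is supplied. With those two points addressed, your proof is a valid and arguably more transparent alternative.
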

The arc of circles and segments are one dimensional solids and can be considered as degenerated cases.
\begin{proof}
Let us assume that $\xi=0$. Then we have also $\zeta=0$ which means that $-2\Im(\bar u f(z))+\omega|f(z)|^2=0$ for all $z\in\partial\Omega$. If $\omega\neq 0$, some easy computations tell us that for all $z\in\partial\Omega$, $f(z)$ belongs to the circle of center $iw_0/2\omega$ and radius $|w_0|/(2|\omega|)$. Since $f$ is an homeomorphism from $\partial\Omega$ onto $f(\partial\Omega)$, $f(\partial\Omega)$ is a connected compact subset of this circle. 
\begin{itemize}
\item If $f(\partial\Omega)$ is the complete circle, it means that $c_1=1$ and $c_k=0$ for all $k\neq 1$. In this case, since $\zeta_1=0$ and $(\check c\ast c)_1=0$, we deduce that $w_0=0$ and hence that the circle is just rotating about its center.
\item
Up to a translation and a rotation, all the conformal mappings that map the circle onto an arc of circle have the form $f(z)=z+(1-h^2)/(z+ih)$ where $h$ is any real number such that $0<h<1$. We can put $f$ into the general form \eqref{general:form} by setting: $c_1=1$, $c_{-1}=1-h^2$ and $c_k=(1-h^2)(-ih)^{-k-1}$ for all $k\leq -2$. Some simple computations lead to:
$(\check c\ast c)_{-1}=-ihc_{-1}$ and $(\check c\ast c)_{k}=(ih^{-1}-ih)c_k$ for all $k\leq -2$. Substituting these expressions into \eqref{potential} and writing that $\zeta_k(w_0,\omega)=0$ for all $k\leq -1$ we obtain the same equation for all $k$ which yields the relation: $w_0=\omega(h-1/h)$. We can then easy prove that this motion corresponds to the case where the velocity field is tangent to the solid.
\end{itemize}
Let us assume now that $\omega$ is zero (and $w_0\neq 0$). In this case, we deduce with \eqref{potential}, that $c_k=0$ for all $k\leq -2$. For $k=-1$, we get $c_{-1}=\bar c_1{w_0}/{\bar w_0}$.
We set $w_0=Re^{i\theta}$, $c_1={\tilde R}e^{i\beta}$ and we rewrite $f$ in the form:
$$f(z)={\tilde R}\left[e^{i\beta} z+e^{i(-\beta+2\theta)}/z\right]=2{\tilde R}e^{i\theta}\left[e^{i(\beta-\theta)}z+e^{-i(\beta-\theta)}/z\right].$$
We seek the image of the unitary circle by $f$. We specify $z=e^{it}$ with $t\in{\mathbf R}/2\pi$ and we get $f(e^{it})=2 {\tilde R}e^{i\theta}\cos(\beta-\theta+t)$.
So the image of the unitary circle is the segment $[-{\tilde R},{\tilde R}]$ turned by an angle $\theta$. The velocity $w_0$ is collinear to the segment.
\end{proof}
\section{Detection of a Moving Ellipse}
\label{detection:ellipse}
When $\mathcal S_0$ is an ellipse, the function $f$ has the form $f(z)=(a+b)z/2+(a-b)/2z$, where $a, b\in\mathbf R_+$, $a>b>0$. 
We can now give the proof of Proposition~\ref{prop:ellipse}.
\begin{proof}
First, we can explicitly compute the inverse function 
\begin{equation}
\label{inverse}
f^{-1}(z)=\frac{z}{(a+b)}\Big(1+\sqrt{1-\frac{(a^2-b^2)}{z^2}}\Big),\quad (z\in\mathcal F_0).
\end{equation}
In this expression, $-\sqrt{a^2-b^2}$ and $\sqrt{a^2-b^2}$ are branch points and the function is holomorphic everywhere but on the segment $[-\sqrt{a^2-b^2},\sqrt{a^2-b^2}]$ which is a branch cut.
Next, we get:
$$\zeta(z)=\Big[-\bar w_0 \frac{a-b}{2}+w_0\frac{a+b}{2}\Big]\frac{1}{z}+i\omega \frac{a^2-b^2}{4}\frac{1}{z^2},\quad(z\in\Omega),$$
and then:
$$
\xi(z)=\frac{[-(a^2-b^2)\bar w_0+(a+b)^2w_0]e^{i\alpha}}{2(z-r)\Big[1+\sqrt{1-\displaystyle\frac{(a^2-b^2)e^{2i\alpha}}{(z-r)^2}}\Big]}
+\frac{ i (a^2-b^2)(a+b)^2e^{2i\alpha}\omega}{4(z-r)^2\Big[1+\sqrt{1-\displaystyle\frac{(a^2-b^2)e^{2i\alpha}}{(z-r)^2}}\Big]^2},\quad(z\in\mathcal F).
$$
Observe that, due to the symmetry of the ellipse, we can change $\alpha$ into $\alpha+\pi$ and accordingly $w_0$ into $-w_0$ without changing the expression of $\xi$. 
The potential $\xi$ is holomorphic everywhere but on the branch cut $[r-\sqrt{a^2-b^2}e^{i\alpha},r+\sqrt{a^2-b^2}e^{i\alpha}]$. So if we know thoroughly $\xi$, we can determine the location of the branch points $r-\sqrt{a^2-b^2}e^{i\alpha}$ and $r+\sqrt{a^2-b^2}e^{i\alpha}$ and hence also the position of the center $r$ and the orientation $\alpha$ (up to $\pi$ only). 
We compute next the limit $\mu:=\lim_{|z|\to +\infty} e^{-i\alpha}\xi(z)z/(a+b)=[-(a-b)\bar w_0+(a+b)w_0]/4$ and we deduce the expression of $w_0$, namely:
$w_0=(\mu+\bar\mu)/b+(\mu-\bar\mu)/a$. The only remaining unknown quantity $\omega$ is next easily obtained, following the same idea.
\end{proof}
\begin{figure}[h]
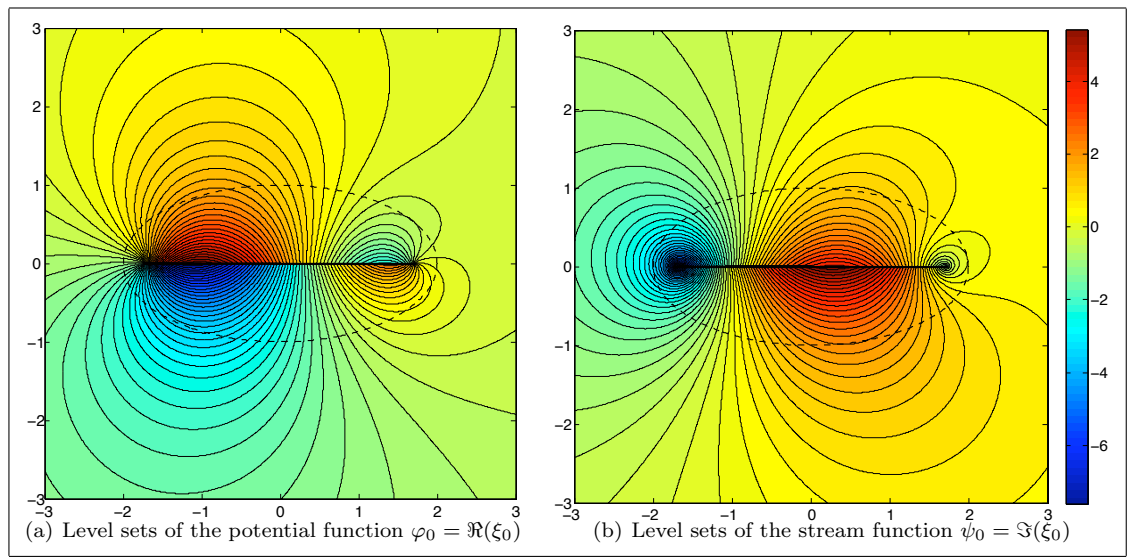
     
    \centering
     \begin{tabular}{|cc|}
     \hline
     \subfigure
          [Level sets of the potential function $\varphi_0=\Re(\xi_0)$]
     {\includegraphics[height=.44\textwidth]{potential_real.pdf}}
     &
     \subfigure
     [Level sets of the stream function $\psi_0=\Im(\xi_0)$]
     {\includegraphics[height=.44\textwidth]{potential_imag.pdf}}\\
     \hline
     \end{tabular}
     \caption{\label{fig:22}Level sets of the holomorphic potential $\xi_0$, for $a=2$, $b=1$, $w_0=e^{i\pi/3}$ and $\omega=-2$. The boundary of the ellipse (dashed line) can hardly be directly detected but the branch points $-\sqrt{3}$ and $\sqrt{3}$ are clearly identifiable.}
     \end{figure}

\section{Detection: General Case}
\label{singu}
\subsection{Singularities of the holomorphic potential}
\label{subsect:singul}
In the preceding example, the branch points of the potential $\xi$ played a crucial role in determining the position of the solid in the fluid. 
Notice that the existence of these points did not depend on the configuration but only on the shape of the solid (they came from the definition \eqref{inverse} of the inverse function $f^{-1}$ and were subsequently just translated and rotated according to the position). We shall prove that this result can be generalized to any solid: there is no singularity in the potential function, that does not come from the conformal mapping $f^{-1}$ (but unfortunately, the potential function may have less singular points than the function $f^{-1}$). Let us make this statement precise:

\begin{definition}[Analytic continuation]
An holomorphic function $\tilde\xi$ (respect. $\tilde \xi_0$) defined in a connected open set $\widetilde{\mathcal F}$ (respect. $\widetilde{\mathcal F}_0$) containing $\mathcal F$ (respect. $\mathcal F_0$) is called an analytic continuation of $\xi$ (respect. $\xi_0$) when $\tilde\xi=\xi$ in $\mathcal F$ (respect. $\tilde\xi_0=\xi_0$ in $\mathcal F_0$). 
\end{definition}
There may exist several analytic continuations of $\xi$ that do not coincide everywhere. Assume that $\tilde\xi_1$ and $\tilde\xi_2$ are two such functions defined respectively on $\widetilde{\mathcal F}_1$ and $\widetilde{\mathcal F}_2$. So the Analytic Continuation Theorem  ensures only that $\tilde\xi_1=\tilde\xi_2$ on the connected component of $\widetilde{\mathcal F}_1\cap\widetilde{\mathcal F}_2$ containing $\mathcal F$. In Section~\ref{detection:ellipse} for instance, we can not choose where the branch points are, but there are many different possible choices for the branch cut, each one corresponding to a different analytic continuation of $\xi$.

Assume that for some potential function $\xi$, there exists an analytic continuation $\tilde\xi$ such that $\widetilde{\mathcal F}=\mathbf C$. Since, by construction, $\xi(z)$ tends to $0$ as $|z|$ goes to infinity, $\tilde\xi$ is a bounded entire function. According to Liouville's Theorem, this function is constant, equal to 0. This case was treated in Section~\ref{sec:stealth} and is possible only for solids listed in Theorem~\ref{theo:stelth}.
For all of the other solids and for any analytic continuation $\tilde\xi$, there exists at least one point, located inside the solid, which does not belong to $\widetilde{\mathcal F}$. This very simple observation allows one to locate the solid in a very first approximation. 

Let us now prove that the singularities of $\tilde\xi$ come from the singularities of $f^{-1}$.
\begin{prop}
If there exists  an analytic continuation $\tilde g$ of $f^{-1}$ defined on an open connected set $\widetilde{\mathcal F}_0$ containing $\mathcal F_0$, then for any configuration $(\mathbf p,\mathbf v)\in\mathcal P\times\mathcal V$, there exists an analytic continuation $\tilde\xi$ of $\xi$ defined on $\widetilde{\mathcal F}:=\mathbf p \big(\widetilde{\mathcal F}_0\setminus \tilde g^{-1}(\{0\})\big)$.
\end{prop}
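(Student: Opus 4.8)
The plan is to factor out the rigid motion and then continue the reference potential $\xi_0=\zeta\circ f^{-1}$. Since $\mathbf p$ acts on $\mathbf C$ as the affine holomorphic bijection $z\mapsto e^{i\alpha}z+r$ and $\xi=\xi_0\circ\mathbf p^{-1}$, it suffices to build an analytic continuation $\tilde\xi_0$ of $\xi_0$ on $\widetilde{\mathcal F}_0\setminus\tilde g^{-1}(\{0\})$ and to set $\tilde\xi:=\tilde\xi_0\circ\mathbf p^{-1}$; the domain $\widetilde{\mathcal F}=\mathbf p\big(\widetilde{\mathcal F}_0\setminus\tilde g^{-1}(\{0\})\big)$ and the inclusion $\mathcal F\subset\widetilde{\mathcal F}$ are then automatic, because $f^{-1}$ never vanishes on $\mathcal F_0$ and the zero set $\tilde g^{-1}(\{0\})$ is discrete. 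Where $\tilde g$ takes values in $\Omega$ the continuation is immediate: $\zeta$ from \eqref{potential} is holomorphic on $\Omega$, so $\zeta\circ\tilde g$ is holomorphic on $\tilde g^{-1}(\Omega)\supset\mathcal F_0$ and agrees there with $\xi_0$. The entire difficulty lies in continuing $\xi_0$ to the points where $\tilde g(z)$ enters the unit disk $D$, that is, \emph{outside} the domain of the Laurent series \eqref{potential}.

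The device I would use is a reflection identity read off from \eqref{above}. Let $R_0\le1$ be a radius such that the series \eqref{general:form} and \eqref{potential} converge for $|z|>R_0$; then the reflected series $\check\zeta(z):=\bar\zeta(1/z)=\sum_{j\ge1}\bar\zeta_{-j}z^{j}$ and $z\mapsto\bar f(1/z)$ converge on $\{|z|<1/R_0\}\supseteq D$. Set $\Phi(z):=-\bar w_0 f(z)+w_0\bar f(1/z)+i\omega f(z)\bar f(1/z)$; formulas \eqref{above} and \eqref{potential} say that $\zeta$ is the negative-power part of $\Phi$, and since $\Phi=2i\psi_0\circ f$ is purely imaginary on $\partial\Omega$ its positive-power part is exactly $-\check\zeta$. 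Comparing Laurent parts yields, on the annulus straddling $\partial\Omega$,
\begin{equation*}
\zeta(z)=\check\zeta(z)-\bar w_0 f(z)+w_0\bar f(1/z)+i\omega f(z)\bar f(1/z)-i\omega(\check c\ast c)_0 .
\end{equation*}
The only term on the right that does not continue into $D$ is $f$ itself; but $f$ enters through the composite $f(\tilde g(z))$, and \emph{this is precisely what the composition removes}: since $\tilde g$ continues $f^{-1}$, the identity theorem gives $f\circ\tilde g=\mathrm{id}$ wherever $\tilde g$ stays in $\Omega$. Replacing $f(\tilde g(z))$ by $z$ produces
\begin{equation*}
\tilde\xi_0(z):=\check\zeta(\tilde g(z))-\bar w_0 z+(w_0+i\omega z)\,\bar f(1/\tilde g(z))-i\omega(\check c\ast c)_0 ,
\end{equation*}
which now involves only the reflected functions $\check\zeta$ and $z\mapsto\bar f(1/z)$, both holomorphic on $D\setminus\{0\}$; hence this expression is holomorphic on $\tilde g^{-1}(D\setminus\{0\})$.

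It remains to glue the two representations. The charts $\tilde g^{-1}(\Omega)$ and $\tilde g^{-1}(D\setminus\{0\})$ cover $\widetilde{\mathcal F}_0\setminus\tilde g^{-1}(\{0\})$ apart from the seam $\tilde g^{-1}(\partial\Omega)$, and across this seam $\zeta$ itself continues: at a point where $\tilde g$ is holomorphic and lands on $\partial\Omega$, the continuation $\tilde g$ forces $f^{-1}$, hence the conformal map $f$, to continue across the corresponding arc of $\partial\Omega$, and with it $\zeta$ and $\check\zeta$. This supplies an annular neighbourhood of $\partial\Omega$ on which both representations are defined; there they coincide, since the second reduces to $\zeta\circ\tilde g$ once $f\circ\tilde g=\mathrm{id}$ is used. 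They therefore patch into a single holomorphic $\tilde\xi_0$ on $\widetilde{\mathcal F}_0\setminus\tilde g^{-1}(\{0\})$. Inspecting the inner formula, every term is holomorphic across the locus $\{\tilde g=0\}$ except $\bar f(1/\tilde g(z))=\bar c_1/\tilde g(z)+\cdots$, whose factor $1/\tilde g$ creates a genuine pole; thus removing $\tilde g^{-1}(\{0\})$ — and nothing more — is exactly what is needed, and $\tilde\xi:=\tilde\xi_0\circ\mathbf p^{-1}$ is the asserted continuation.

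The main obstacle is the one isolated above: $\zeta$ is a Laurent series centred at $0$ and in general admits no holomorphic extension into $D$, so the naive $\zeta\circ\tilde g$ collapses exactly when $\tilde g$ dips inside the unit circle; everything hinges on recognising that the non-extendable part of $\zeta$ is carried solely by $f$ and is annihilated upon composing with the continued $f^{-1}$. The remaining technical point I would have to check concerns the seam $\tilde g^{-1}(\partial\Omega)$: at its regular points the local continuation of $f$ (hence $\zeta$) provided by the holomorphy of $\tilde g$ makes $\zeta\circ\tilde g$ holomorphic, while the isolated critical points of $\tilde g$ lying on the seam are removed a posteriori by Riemann's removable-singularity theorem, $\tilde\xi_0$ being bounded near them.
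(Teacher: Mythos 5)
Your proof is correct and follows essentially the same route as the paper: after conjugating away the rigid motion, you isolate the non-extendable part of $\zeta$ as the factor $f$, cancel it via $f\circ\tilde g=\mathrm{id}$, and your inner representation $\check\zeta(\tilde g(z))-\bar w_0z+(w_0+i\omega z)\bar f(1/\tilde g(z))-i\omega(\check c\ast c)_0$ is algebraically identical to the paper's formula on $\widetilde{\mathcal F}_0^-$, with the gluing across $\{|\tilde g|=1\}$ and the identification of $\tilde g^{-1}(\{0\})$ as the only genuine poles handled as in the paper. The only cosmetic caveat is that under the hypothesis $c\in\ell^1(\mathbf C)$ the ``annulus straddling $\partial\Omega$'' may degenerate to the unit circle itself, but your reflection identity only needs to hold there as an equality of absolutely convergent Fourier series, so nothing is lost.
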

In this proposition, the notation $\tilde g^{-1}(\{0\})$ stands for the preimage of $\{0\}$ under $\tilde g$ and does not mean that $\tilde g$ is invertible. Since $\tilde g$ is holomorphic, the set $\tilde g^{-1}(\{0\})$ consists only in isolated points and $\widetilde{\mathcal F}_0\setminus \tilde g^{-1}(\{0\})$ is still connected and still contains ${\mathcal F}_0$.
\begin{proof}
For all $z\in\partial D$, we can rewrite $\zeta$ in the form:
\begin{multline}
\zeta(z)=-\bar w_0(f(z)-c_1z)+w_0{\bar c_1}{z}^{-1}+i\omega\Big[\bar c_1z^{-1}(f(z)-c_1z)+\\
\sum_{k\geq 1}\bar c_{-k}z^k\Big(f(z)-\sum_{-k\leq j\leq 1}c_jz^j\Big)\Big].
\end{multline}
Expanding the right hand side and recombining terms, we get the identity:
$$\zeta(z)=-\bar w_0f(z)+c_1\bar w_0z+u\bar c_1z^{-1}+i\omega\Big[f(z)\bar f(z^{-1})-H_1(z)\Big],\quad(z\in\partial D),$$
where $H_1(z):=\sum_{j\geq 0}(\check c\ast c)_j z^j$ and $\bar f(z):=\bar c_1z+\sum_{k\leq -1}\bar c_kz^k$.
Classical results for the convolution product ensure that this series is uniformly convergent for $|z|\leq 1$ since $\|\check c\ast c\|_{\ell^1(\mathbf C)}\leq \|\check c\|_{\ell^1(\mathbf C)}\|c\|_{\ell^1(\mathbf C)}$. 
We next obtain that:
$$\zeta(f^{-1}(z))=-\bar w_0z+c_1\bar w_0f^{-1}(z)+u\bar c_1/f^{-1}(z)+i\omega\Big[z\bar f(1/f^{-1}(z))-H_1(f^{-1}(z))\Big],\quad(z\in\partial\mathcal S_0).$$
Let $\tilde g$ by any analytic continuation of $f^{-1}$ (not necessary invertible), defined in an open set $\widetilde{\mathcal F}_0$.  It can be split into three parts: $\widetilde{\mathcal F}_0^+:=
\{z\in\widetilde{\mathcal F}_0\,:\,|\tilde g(z)|> 1\}$, $\widetilde{\mathcal F}_0^-:=
\{z\in\widetilde{\mathcal F}_0\,:\,0<|\tilde g(z)|< 1\}$ and $\widetilde{\mathcal F}_0^1:=\{z\in\mathbf C\,:\,\tilde g(z)=1\}$. We can next define:
\begin{alignat*}{3}
\tilde\xi_0(z):=&-\bar w_0z+c_1\bar w_0\tilde g(z)+w_0\bar c_1/\tilde g(z)+i\omega\Big[z\bar f(1/\tilde g(z))-H_1(\tilde g(z))\Big],&\quad& z\in\widetilde{\mathcal F}_0^-\cup\widetilde{\mathcal F}_0^1,\\
\tilde\xi_0(z):=&-\bar w_0z+c_1\bar w_0\tilde g(z)+w_0\bar c_1/\tilde g(z)+i\omega\Big[H_2(\tilde g(z))\Big],&&z\in\widetilde{\mathcal F}_0^+,
\end{alignat*}
where $H_2(z):=\sum_{j\leq -1}(\check c\ast c)_j z^j$
is uniformly convergent for $|z|\geq 1$.
We deduce that the function $\tilde\xi_0$ is holomorphic in $\widetilde{\mathcal F}_0^-$ and in $\widetilde{\mathcal F}_0^+$ and continuous in $\widetilde{\mathcal F}_0\setminus \tilde g^{-1}(\{0\})$. Let $z_0\in\widetilde{\mathcal F}_0^1$ and denote $z_1:=\tilde g(z_0)$. Since $\tilde g$ is holomorphic at the point $z_0$, there exists $R>0$, $n\geq 1$ and a function $h$ holomorphic in the disk $D(0,R)$ such that $h(0)\neq 0$ and 
$$\tilde g(z)=z_1+(z-z_0)^nh(z-z_0),\quad z\in D(0,R).$$
This identity allows us to describe the set $\widetilde{\mathcal F}_0^1$ nearby the point $z_0$. For instance, if $n=3$, and for $R$ small enough, we get something like in Figure~\ref{conf_map} where $z_1=\tilde g(z_0)$, $\cup_{j=1}^3 A_j^+=D(z_0,R)\cap \widetilde{\mathcal F}_0^+$, $\cup_{j=1}^3 A_j^-=D(z_0,R)\cap \widetilde{\mathcal F}_0^-$ and the curves radiating from $z_0$ correspond to the set $D(z_0,R)\cap\widetilde{\mathcal F}_0^1$. Except maybe at the point $z_0$, the boundaries shared by the regions $A^+_j$ and $A^-_j$ are smooth. The function $\tilde g$ maps $A_1^+$ onto $U^+$ (an open set located outside the unitary disk) and $A_1^-$ onto $U^-$ (an open set located inside the unitary disk). Furthermore, the function $\tilde g|_{A^+\cup A_1^-}:A_1^+\cup A_1^-\to U^-\cup U^+$ is a conformal mapping. Consider next a conformal mapping $\phi$ which maps a neighbourhood of $z_1$ onto a neighbourhood of $0$ as in Figure~\ref{conf_map} and such that the image of the unitary circle is the imaginary axis. We can then apply \cite[Theorem 16.8]{Rudin:1987aa}: the function $\xi_0\circ\tilde g^{-1}\circ\phi^{-1}$ is holomorphic on both sides of the imaginary axis and continuous across this boundary, so it is holomorphic over the whole domain. We deduce that $\xi_0$ is holomorphic across the boundary between $A_1^+$ and  $A_1^-$. We can repeat this process with the domains $A_1^+\cup A_2^-$, $A_2^+\cup A_2^-$ and so on. Finally, we obtain that $\xi_0$ is holomorphic on the whole disk $D(z_0,R)$ except maybe at the point $z_0$. But once more, the continuity of the function $\xi_0$ does not allow this possibility.
\begin{figure}
\centerline{
\begin{picture}(0,0)%
\includegraphics{fig_map_conf1.pdf}%
\end{picture}%
\setlength{\unitlength}{3947sp}%
\begingroup\makeatletter\ifx\SetFigFont\undefined%
\gdef\SetFigFont#1#2#3#4#5{%
  \fontfamily{#3}\fontseries{#4}\fontshape{#5}%
  \selectfont}%
\fi\endgroup%
\begin{picture}(5066,1906)(2680,-5513)
\put(7368,-4879){\makebox(0,0)[b]{\smash{{\SetFigFont{10}{12.0}{\rmdefault}{\mddefault}{\updefault}{\color[rgb]{0,0,0}$A_2^-$}%
}}}}
\put(7469,-4500){\makebox(0,0)[b]{\smash{{\SetFigFont{10}{12.0}{\rmdefault}{\mddefault}{\updefault}{\color[rgb]{0,0,0}$A_1^+$}%
}}}}
\put(7067,-4172){\makebox(0,0)[b]{\smash{{\SetFigFont{10}{12.0}{\rmdefault}{\mddefault}{\updefault}{\color[rgb]{0,0,0}$A_1^-$}%
}}}}
\put(6928,-5039){\makebox(0,0)[b]{\smash{{\SetFigFont{10}{12.0}{\rmdefault}{\mddefault}{\updefault}{\color[rgb]{0,0,0}$A_2^+$}%
}}}}
\put(6646,-4374){\makebox(0,0)[b]{\smash{{\SetFigFont{10}{12.0}{\rmdefault}{\mddefault}{\updefault}{\color[rgb]{0,0,0}$A_3^+$}%
}}}}
\put(7024,-4448){\makebox(0,0)[lb]{\smash{{\SetFigFont{10}{12.0}{\rmdefault}{\mddefault}{\updefault}{\color[rgb]{0,0,0}$z_0$}%
}}}}
\put(6635,-4688){\makebox(0,0)[b]{\smash{{\SetFigFont{10}{12.0}{\rmdefault}{\mddefault}{\updefault}{\color[rgb]{0,0,0}$A_3^-$}%
}}}}
\put(5261,-4603){\makebox(0,0)[lb]{\smash{{\SetFigFont{10}{12.0}{\rmdefault}{\mddefault}{\updefault}{\color[rgb]{0,0,0}$z_1$}%
}}}}
\put(6232,-3742){\makebox(0,0)[b]{\smash{{\SetFigFont{10}{12.0}{\rmdefault}{\mddefault}{\updefault}{\color[rgb]{0,0,0}$\tilde g$}%
}}}}
\put(4704,-4387){\makebox(0,0)[lb]{\smash{{\SetFigFont{10}{12.0}{\rmdefault}{\mddefault}{\updefault}{\color[rgb]{0,0,0}$U^+$}%
}}}}
\put(5438,-4392){\makebox(0,0)[lb]{\smash{{\SetFigFont{10}{12.0}{\rmdefault}{\mddefault}{\updefault}{\color[rgb]{0,0,0}$U^-$}%
}}}}
\put(3455,-4597){\makebox(0,0)[lb]{\smash{{\SetFigFont{10}{12.0}{\rmdefault}{\mddefault}{\updefault}{\color[rgb]{0,0,0}$0$}%
}}}}
\put(4327,-3764){\makebox(0,0)[b]{\smash{{\SetFigFont{10}{12.0}{\rmdefault}{\mddefault}{\updefault}{\color[rgb]{0,0,0}$\phi$}%
}}}}
\put(3538,-4315){\makebox(0,0)[lb]{\smash{{\SetFigFont{10}{12.0}{\rmdefault}{\mddefault}{\updefault}{\color[rgb]{0,0,0}$V^-$}%
}}}}
\put(2899,-4310){\makebox(0,0)[lb]{\smash{{\SetFigFont{10}{12.0}{\rmdefault}{\mddefault}{\updefault}{\color[rgb]{0,0,0}$V^+$}%
}}}}
\put(4728,-5159){\makebox(0,0)[rb]{\smash{{\SetFigFont{10}{12.0}{\rmdefault}{\mddefault}{\updefault}{\color[rgb]{0,0,0}$\{|z|=1\}$}%
}}}}
\put(7087,-5449){\makebox(0,0)[b]{\smash{{\SetFigFont{10}{12.0}{\rmdefault}{\mddefault}{\updefault}{\color[rgb]{0,0,0}$D(z_0,R)$}%
}}}}
\put(2776,-5161){\makebox(0,0)[rb]{\smash{{\SetFigFont{10}{12.0}{\rmdefault}{\mddefault}{\updefault}{\color[rgb]{0,0,0}$\{\Re(z)=0\}$}%
}}}}
\end{picture}
}
\caption{\label{conf_map}The conformal mapping $\tilde g$ around the point $z_0$ for $n=3$.}
\end{figure}
\end{proof} 
As already mentioned, the potential function can make some singularities of $f^{-1}$ to vanish. This is illustrated by the example of Figure~\ref{conf_map_p}.
\begin{figure}
\centerline{
\begin{picture}(0,0)%
\includegraphics{ball_with_tail.pdf}%
\end{picture}%
\setlength{\unitlength}{3947sp}%
\begingroup\makeatletter\ifx\SetFigFont\undefined%
\gdef\SetFigFont#1#2#3#4#5{%
  \reset@font\fontsize{#1}{#2pt}%
  \fontfamily{#3}\fontseries{#4}\fontshape{#5}%
  \selectfont}%
\fi\endgroup%
\begin{picture}(1900,1110)(3797,-5258)
\put(5041,-4636){\makebox(0,0)[lb]{\smash{{\SetFigFont{10}{12.0}{\rmdefault}{\mddefault}{\updefault}{\color[rgb]{0,0,0}$C$}%
}}}}
\put(3812,-4641){\makebox(0,0)[lb]{\smash{{\SetFigFont{10}{12.0}{\rmdefault}{\mddefault}{\updefault}{\color[rgb]{0,0,0}$A$}%
}}}}
\put(4437,-4642){\makebox(0,0)[lb]{\smash{{\SetFigFont{10}{12.0}{\rmdefault}{\mddefault}{\updefault}{\color[rgb]{0,0,0}$B$}%
}}}}
\end{picture}%
}
\caption{\label{conf_map_p}The solid consists in a disk of center $C$ with a segment $[A,B]$. When this solid is moving to the left, parallel to the segment, one can easily check that the potential coincides with the potential of the disk. Although any analytic continuation of $f^{-1}$ has singularities at $A$ and $B$, the complex potential does not see these points. In this case, the singularities do not allow one to determine the orientation of the solid.}
\end{figure}
\subsection{Asymptotic expansion of the holomorphic potential}
In this section, we shall compute the asymptotic expansion of $\xi$ in terms of the geometrical data of $\mathcal S_0$ and the configuration $(\mathbf p,\mathbf v)\in\mathcal P\times \mathcal V$. As explained in the preceding section, we know that if $\mathcal S_0$ is not one of the solid listed in Theorem~\ref{theo:stelth}, the potential function admits no analytic continuation on the whole complex plane. It allows one to deduce  approximately where the solid is. For all $\nu\in\mathbf C$,  we can next consider $\Gamma$, a contour large enough to encircle the solid and the point $\nu$. The contour $\widetilde\Gamma:=f(e^{-i\alpha}(\Gamma-r))$ encircles the unitary disk. According to the expression \eqref{a:priori} of the potential as a Laurent series, we obtain that, for all $n\geq 1$:
\begin{align*}
\lambda_n(\nu)=\frac{1}{2i\pi}\oint_{\Gamma} \xi(z)(z-\nu)^{n-1}{\rm d}z&=\frac{e^{i\alpha}}{2i\pi}\oint_{\widetilde\Gamma} \zeta(z)\left(e^{i\alpha}f(z)+r-\nu\right)^{n-1}f'(z){\rm d}z\\
&=\frac{1}{n}\frac{1}{2i\pi}\oint_{\widetilde\Gamma}\zeta(z)\frac{d}{dz}\left(e^{i\alpha}f(z)+r-\nu\right)^{n}{\rm d}z\\
&=-\frac{1}{n}\frac{1}{2i\pi}\oint_{\widetilde\Gamma}\zeta'(z)\left(e^{i\alpha}f(z)+r-\nu\right)^{n}{\rm d}z\\
&=-\frac{1}{n}\frac{1}{2i\pi}\sum_{k=0}^{n}\binom{n}{k}e^{ik\alpha} (r-\nu)^{n-k}\oint_{\widetilde\Gamma} \zeta'(z)f(z)^k{\rm d}z.
\end{align*}
If we define the complex sequence $d:=(d_k)_{k\in\mathbf Z}$ by $d_k:=(k+1)\zeta_{k+1}$ for all $k\leq -1$ and $d_k=0$ for $k\geq 0$, we obtain: 
$$\lambda_n(\nu)=-\frac{1}{n}\sum_{k=1}^{n}\binom{n}{k}e^{ik\alpha} (r-\nu)^{n-k}(d\ast c^k)_{-1},\quad (\nu\in\mathbf C).$$
We can rewrite the last term:
\begin{align*}
(d\ast c^k)_{-1}&=\mespn\sum_{i_1+\ldots+i_{k+1}=-1}c_{i_1}\ldots c_{i_k}d_{i_{k+1}}\\
&=-A_{k}\bar w_0+B_{k}w_0+i\omega C_{k},
\end{align*}
where 
\begin{alignat*}{3}
A_k&:=\mespn\sum_{i_1+\ldots+i_{k+1}=0\atop i_1\leq -1}\mespn i_1c_{i_1}\ldots c_{i_{k+1}},&\quad&
B_k&:=\mespn\sum_{i_1+\ldots+i_{k+1}=0\atop i_1\leq -1}\mespn i_1\bar c_{-i_1}c_{i_2}\ldots c_{i_{k+1}},\\
C_k&:=\mespn\sum_{i_1+\ldots+i_{k+2}=0\atop i_1+i_2\leq -1}\mespn (i_1+i_2)\bar c_{-i_1}c_{i_2}\ldots c_{i_{k+2}}.
\end{alignat*}
In the following, to simplify the notation, we consider the quantities $\mathcal A_k:=-A_k/k$, $\mathcal B_k:=-B_k/k$ and $\mathcal C_k:=-C_k/k$. Indeed, we get, for all $n\geq 1$:
\begin{equation}
\label{system:1}
\lambda_n(\nu)=\sum_{k=1}^n\binom{n-1}{k-1}e^{ik\alpha}(r-\nu)^{n-k}\Big[-\mathcal A_{k}\bar w_0+\mathcal B_{k}w_0+i\omega\mathcal C_{k}\Big].
\end{equation}
The problem of detection can now be reformulated as a purely algebraic problem: the complex sequence $(\lambda_j(\nu))_{j\geq 1}$ being given for all $\nu\in\mathbf C$, as well as the complex numbers $\mathcal A_k$, $\mathcal B_K$ and $\mathcal C_k$ ($k\geq 1)$, can we solve the infinite nonlinear system of equations \eqref{system:1} and find the values of $r,\alpha,w_0$ and $\omega$? According to the results of both Section~\ref{sec:contre_exx} and Section~\ref{sec:stealth}, we already know that there exist cases (namely, coefficients $(c_k)_{k\in\mathbf Z})$ for which the answer is negative.

In order to rewrite this infinite set of equations in a convenient short form, we introduce some linear operators: let us denote by $N$ any positive integer and define $\mathcal G_N:\mathbf R^3\to \mathbf C^{N}$  by $(\mathcal G_NU)_{k}:=(-\mathcal A_k+\mathcal B_k)U_1+i(\mathcal A_k+\mathcal B_k)U_2+i\mathcal C_kU_3$ for all $U=(U_1,U_2,U_2)^T\in\mathbf R^3$ and all $1\leq k\leq N$. We define  $D_N:\mathbf C^{N}\to\mathbf C^{N}$ and $S_N:\mathbf C^{N}\to\mathbf C^{ N}$   
as well, by respectively $(D_NZ)_k:=kZ_{k}$ ($1\leq k\leq N$) and $(S_NZ)_1:=0$ and $(S_NZ)_k:=Z_{k-1}$ $(2\leq k\leq N)$ for all $Z:=(Z_1,\ldots,Z_N)\in\mathbf C^N$. The first $N$ equations \eqref{system:1} can now be rewritten as:
\begin{subequations}
\label{main:eq}
\begin{align}
\Lambda_N(\nu)&=e^{\log(r-\nu)D_N}e^{S_ND_N}e^{-\log(r-\nu)D_N}e^{i\alpha D_N}\mathcal G_NU,\\
&=\Theta_N(r-\nu,\alpha)\mathcal G_NU,
\end{align}
\end{subequations}
where $U:=(\Re(w_0),\Im(w_0),\omega)^T$ and $\Lambda_N(\nu):=(\lambda_1(\nu),\ldots,\lambda_N(\nu))^T$. The operator $e^{S_ND_N}$ is lower triangular and the identity $(e^{S_ND_N})_{k,n}=\binom{n-1}{k-1}$ for all $1\leq k\leq n\leq N$, not so obvious, can be found in \cite{Call:1993aa}. Considering the expressions \eqref{main:eq}, it is worth noting that:
\begin{itemize}
\item In \eqref{main:eq}, the coefficients $\lambda_j(\nu)$ in the asymptotic expansion of $\xi$ are obtained by applying to the vector $U$ (the {\it velocity}) first the operator $\mathcal G_N$ encapsulating the information relating to the geometry of the solid and next the operator $\Theta_N(r-\nu,\alpha)$ depending only on the position.
\item The linear operator $\mathcal G_N$ depends on the complex sequence $c$ only, i.e. on the shape of the solid. Moreover, the complex quantities $\mathcal A_k-c_{-k}c_1^k$ ($k\geq 1$), $\mathcal B_{k+1}-c_{-k}\bar c_1c_1^k$ ($k\geq 2$) and $\mathcal C_{k-1}-c_{-k}\bar c_{-1}c_1^{k-1}$ ($k\geq 1$) do not depend on $c_{-n}$ for all $n\geq k$. In other words, for all $N\geq 1$, $\mathcal G_N$ depends on $c_1, c_{-1}, c_{-2},\ldots,c_{-N-1}$ only. We deduce:
\begin{prop}
Let $\mathcal S_0^1$ and $\mathcal S_0^2$ be two shapes described by means of the complex sequences $(c^1_k)_{k\geq 1}$ and $(c^2_k)_{k\geq 1}$, such that $c^1_k=c^2_k$ for all $1\leq k\leq N$. Then, if both solids have the same configuration, their complex potentials will have the same asymptotic expansion up to the order $N-1$. 
\end{prop}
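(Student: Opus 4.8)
The plan is to read the statement off the factored representation \eqref{main:eq} of the Laurent coefficients, which cleanly separates the shape from the configuration: writing $\Lambda_M(\nu)=\Theta_M(r-\nu,\alpha)\,\mathcal G_M U$ for any $M\geq 1$, the operator $\Theta_M(r-\nu,\alpha)$ and the vector $U=(\Re(w_0),\Im(w_0),\omega)^T$ carry only the configuration $(\mathbf p,\mathbf v)$, whereas $\mathcal G_M$ carries only the shape. First I would fix $\nu\in\mathbf C$ and observe that, since the two solids are assumed to move in the same configuration, the representatives $(r,\alpha,w_0,\omega)$ can be chosen equal; hence $\Theta_M(r-\nu,\alpha)$ and $U$ are identical for the two solids. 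Consequently the two coefficient vectors $\Lambda_M(\nu)$ agree as soon as the two operators $\mathcal G_M$ agree.

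The second step is to match $\mathcal G_M$ to the hypothesis on the conformal coefficients. By the observation preceding the statement, $\mathcal G_M$ is assembled from the scalars $\mathcal A_k,\mathcal B_k,\mathcal C_k$ for $1\leq k\leq M$, and each of these depends on the sequence $c$ only through the finitely many entries $c_1,c_{-1},\dots,c_{-M-1}$. Thus, choosing $M$ so that this finite window of indices falls inside the range on which $c^1$ and $c^2$ are assumed to coincide, we obtain $\mathcal G_M^{(1)}=\mathcal G_M^{(2)}$, and with the first step this gives $\Lambda_M^{(1)}(\nu)=\Lambda_M^{(2)}(\nu)$ for every $\nu$; equivalently, the two potentials share the Laurent coefficients $\lambda_1(\nu),\dots,\lambda_M(\nu)$. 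The precise count — agreement of $c$ up to index $N$ yielding agreement of the expansion up to order $N-1$ — is then obtained by tracking the one-step index shift between $\mathcal G_M$ and its coefficients through \eqref{system:1}.

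The real substance, and the step I expect to be the main obstacle, is the finite-dependence property of $\mathcal G_M$ invoked above: that each $\mathcal A_k$, $\mathcal B_k$, $\mathcal C_k$ is a finite polynomial in the $c_j$ whose most negative index is bounded in terms of $k$. I would prove it by an extremality count in the convolution sums, exploiting that $c_j=0$ for $j=0$ and $j\geq 2$, so that every nonzero monomial uses only indices in $\{1\}\cup\{\,j:j\leq -1\,\}$. The argument is cleanest for $A_k=\sum_{\substack{i_1+\dots+i_{k+1}=0\\ i_1\leq -1}}i_1c_{i_1}\cdots c_{i_{k+1}}$: if one factor carries an index $\leq -k$, the remaining $k$ indices must sum to at least $k$, and since each is at most $1$ this forces all of them to equal $1$ and the distinguished index to equal exactly $-k$; the constraint $i_1\leq -1$ then selects the single admissible monomial $i_1=-k$, $i_2=\cdots=i_{k+1}=1$, which after division by $k$ produces precisely the leading term $c_{-k}c_1^{k}$, while every other monomial involves only $c_1,c_{-1},\dots,c_{-(k-1)}$.

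The same boundedness follows for $B_k$ and $C_k$, the only extra point being that the conjugated factors (which enter through $\check c_j=\bar c_{-j}$, nonzero only for $j=-1$ and $j\geq 1$) must be included in the index budget; once one checks that a second large-index coefficient cannot appear without breaking the zero-sum together with the side constraint $i_1+i_2\leq -1$, the finiteness is established and the three preceding paragraphs combine: equal configuration makes $\Theta_M$ and $U$ common, the coefficient hypothesis makes $\mathcal G_M$ common, and \eqref{main:eq} transfers both to equality of the leading Laurent coefficients, which is the asserted agreement up to order $N-1$.
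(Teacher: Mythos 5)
Your proposal is correct and follows essentially the same route as the paper: the proposition is deduced there directly from the factorization $\Lambda_N(\nu)=\Theta_N(r-\nu,\alpha)\mathcal G_N U$ together with the (merely asserted) observation that $\mathcal G_N$ depends only on finitely many coefficients $c_1,c_{-1},\ldots,c_{-N-1}$. Your extremality count in the convolution sums actually supplies a justification for that finite-dependence claim which the paper omits, and correctly recovers the leading terms $c_{-k}c_1^k$ etc.; the only loose end on both sides is the off-by-one bookkeeping between the indexing $(c_k)_{k\geq 1}$ in the statement and the indexing $c_1,c_{-1},c_{-2},\ldots$ used in \eqref{system:1}, which you flag but do not fully carry out.
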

\item The solutions $(r_1,r_2,\alpha,\Re(w_0),\Im(w_0),\omega)^T$ of all of the equations \eqref{main:eq} (for all $N\geq 1$), form a sub-analytic set of $\mathbf R^6$ (because $\Theta_N$ is analytic in $r_1,r_2$ and $\alpha$). This subanalytic set has dimension $d$ with $0\leq d\leq 6$. However, because the dependence in $(\Re(w_0),\Im(w_0),\omega)^T$ is linear, if it had dimension $d\geq 4$, it would entail the existence of a position $(r_1,r_2,\alpha)^T$ and a non-zero velocity $U_0\in\mathbf R^3$ such that $\Theta_N(r-\nu,\alpha)\mathcal G_NU_0=0$ for all $N\geq 1$. As already mentioned before, this case is only possible if the solid is in the list of Theorem~\ref{theo:stelth}. 

We do not know if there exist solids such that $0<d\leq 3$. Observe that in Section~\ref{sec:contre_exx}, we have only given examples for which the solids can occupy a finite number of different positions, so $d=0$ in these cases. 
\end{itemize}
For all $N\geq 1$, we can invert the system \eqref{main:eq} to obtain:
\begin{align}
\mathcal G_NU&=e^{-i\alpha D_N}e^{\log(r-\nu)D_N}e^{-S_ND_N}e^{-\log(r-\nu)D_N}\Lambda_N(\nu),\\
&=\Theta_N(r-\nu,\alpha)^{-1}\Lambda_N(\nu),
\end{align}
or equivalently, with the notation of equation \eqref{system:1},
\begin{equation}
\label{system:2}
-\mathcal A_n\bar w_0+\mathcal B_nw_0+i\omega\mathcal C_n=e^{-in\alpha}\left[\sum_{k=1}^n\binom{n-1}{k-1}(\nu-r)^{n-k}\lambda_k(\nu)\right],
\end{equation}
for all $n\in\mathbf N$. In this form, we can easily prove:
\begin{prop}
\label{prop:velo}
If the solid does not occur in Theorem~\ref{theo:stelth} and its position is given, then we can deduce its velocity.
\end{prop}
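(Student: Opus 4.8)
The plan is to read Proposition~\ref{prop:velo} directly off the inverted system \eqref{system:2}. Once the position $\mathbf p=(R(\alpha),0,\mathbf r)$ is given, $\alpha$ and $r$ are known, and for any fixed $\nu\in\mathbf C$ the right-hand side of \eqref{system:2}, namely $e^{-in\alpha}\bigl[\sum_{k=1}^n\binom{n-1}{k-1}(\nu-r)^{n-k}\lambda_k(\nu)\bigr]$, is entirely computable from the measured Laurent coefficients $\lambda_k(\nu)$. Writing $U:=(\Re(w_0),\Im(w_0),\omega)^T\in\mathbf R^3$, the left-hand side $-\mathcal A_n\bar w_0+\mathcal B_nw_0+i\omega\mathcal C_n$ is exactly $(\mathcal G_NU)_n$, so the first $N$ equations take the compact form $\mathcal G_NU=\Theta_N(r-\nu,\alpha)^{-1}\Lambda_N(\nu)$, a \emph{linear} system in the three real unknowns $U$ whose right-hand side is known. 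Recovering the velocity therefore amounts to proving that $\mathcal G_N$ is injective for some finite $N$, and then solving this (consistent, possibly overdetermined) system; consistency is automatic since the data comes from a genuine configuration.

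The key step is the injectivity of $\mathcal G_N$ for $N$ large enough, and this is where the stealth classification enters. First I would observe that the kernels $\ker\mathcal G_N\subset\mathbf R^3$ form a nonincreasing nested family of subspaces as $N$ grows, since $\mathcal G_{N+1}U=0$ forces $\mathcal G_NU=0$. Next, suppose $U_0$ lies in $\ker\mathcal G_N$ for every $N$. Because $\Theta_N(r-\nu,\alpha)$ is invertible, the relation \eqref{main:eq} gives $\Lambda_N(\nu)=\Theta_N(r-\nu,\alpha)\mathcal G_NU_0=0$ for all $N$, i.e.\ all Laurent coefficients of the potential associated with the velocity $U_0$ vanish; by \eqref{a:priori} this means $\xi\equiv 0$, so the configuration $(\mathbf p,U_0)$ is a stealth motion. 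Since by hypothesis $\mathcal S_0$ is not one of the shapes listed in Theorem~\ref{theo:stelth}, the only stealth velocity is $U_0=0$. Hence $\bigcap_{N\geq 1}\ker\mathcal G_N=\{0\}$.

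Finally, because $\mathbf R^3$ is finite-dimensional, the nonincreasing chain of kernels must stabilise at some index $N_0$, and its stable value equals the intersection $\{0\}$; thus $\mathcal G_{N_0}$ is injective. For that $N_0$ the system $\mathcal G_{N_0}U=\Theta_{N_0}(r-\nu,\alpha)^{-1}\Lambda_{N_0}(\nu)$ has a unique solution $U$, which delivers $w_0$ and $\omega$ and hence the velocity $\mathbf v$. The only genuine obstacle is the identification of $\bigcap_N\ker\mathcal G_N$ with the set of stealth velocities and the appeal to Theorem~\ref{theo:stelth}; the descent from the injective infinite map to an injective finite truncation is then a soft consequence of working in the three-dimensional space $\mathbf R^3$, and the concrete inversion is routine linear algebra.
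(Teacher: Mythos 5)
Your proof is correct and follows essentially the same route as the paper: both arguments reduce the claim to showing that some finite truncation $\mathcal G_{N_0}$ is injective on $\mathbf R^3$, and both establish this by identifying a nonzero element of $\bigcap_{N}\ker\mathcal G_N$ with a nontrivial stealth velocity, which Theorem~\ref{theo:stelth} forbids for the solids under consideration. The paper phrases the finite-dimensionality step as $\mathbf R$-linear independence of the three column sequences $\mathcal G^1,\mathcal G^2,\mathcal G^3$ forcing $\mathrm{rank}\,\mathcal G_N=3$ for $N$ large, which is exactly the dual of your stabilising nested chain of kernels.
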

\begin{proof}
Denote $\mathcal G^j$ ($j=1,2,3$)  the complex sequences respectively defined by $\mathcal G^1_k:=(-\mathcal A_k+\mathcal B_k)_{k\geq 1}$, $\mathcal G^2_k:=(i(\mathcal A_k+\mathcal B_k))_{k\geq 1}$ and $\mathcal G^3_k:=(i\mathcal C_k)_{k\geq 1}$. If these sequences  were not $\mathbf R$-linearly independent in $\mathbf C^{\mathbf N}$, it would exist $(U_1,U_2,U_3)^T\neq 0$ in $\mathbf R^3$ such that $\sum_j U_j\mathcal G^j=0$ and then, for any position $(r,\alpha)$ and any $N\geq 1$, we would have $\Theta_N(r-\nu,\alpha)\mathcal G_NU=0$ which contradicts the assumption that the solid is not listed in Theorem~\ref{theo:stelth}. Conversely, if the sequences $\mathcal G^i$ are $\mathbf R$-linearly independent, then there exists $N_0\geq 3$ such that for all $N\geq N_0$, $\mathcal G_N$ is of rank 3 and the proof is completed. 
\end{proof}
\begin{prop}
\label{prop:first:detect}
Assume that the shape $\mathcal S_0$ of the solid, described by the conformal mapping \eqref{general:form}, is such that $e^{i\pi/2}\mathcal S_0=\mathcal S_0$ (the shape of the solid is invariant by rotation of angle $\pi/2$ and center 0). 
Then, from the holomorphic potential $\xi$, we can always deduce the values of $r$, $w_0e^{i\alpha}$ (the linear velocity expressed in a reference fixed frame)  and  $|\omega|$ (the absolute value of the rotational velocity). 
\end{prop}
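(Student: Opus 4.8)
The plan is to convert the $\pi/2$-invariance into an exact support condition on the sequence $c$, which forces the three scalar unknowns carried by the velocity to decouple according to the residue of the index modulo $4$; each parameter can then be read off from one family of Laurent coefficients. I will name the bracket in \eqref{system:1} by $\sigma_n:=-\mathcal A_n\bar w_0+\mathcal B_nw_0+i\omega\mathcal C_n$, so that $\lambda_n(\nu)=\sum_{k=1}^n\binom{n-1}{k-1}e^{ik\alpha}(r-\nu)^{n-k}\sigma_k$.

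First I would translate the geometry into algebra. Since $e^{i\pi/2}\mathcal S_0=\mathcal S_0$ is equivalent to $i\mathcal F_0=\mathcal F_0$, the map $z\mapsto if(z)$ is a conformal bijection of $\Omega$ onto $\mathcal F_0$ fixing $\infty$; by the uniqueness part of the Riemann mapping theorem for exterior domains it equals $f$ precomposed with a rotation of $\Omega$, and matching the leading term $c_1z$ forces $if(z)=f(iz)$. Comparing the two expansions \eqref{general:form} gives $c_k(i^{k}-i)=0$, whence $c_k=0$ for every $k\not\equiv1\pmod4$; in particular $c_{-1}=c_{-2}=0$. Consequently $f$ and $f^{-1}$ are supported on powers $\equiv1\pmod4$, while $\check c\ast c$ is supported on indices $\equiv0\pmod4$. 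Substituting this into $\zeta_k=-\bar w_0c_k+w_0\check c_k+i\omega(\check c\ast c)_k$ I would show that the three unknowns decouple by residue class: $\zeta_k$ contains only $w_0$ when $k\equiv3$, only $\bar w_0$ when $k\equiv1$, only $\omega$ when $k\equiv0$, and $\zeta_k=0$ when $k\equiv2\pmod4$. Because $f^{-1}$ preserves residues modulo $4$, composing transports this structure to $\xi_0=\zeta\circ f^{-1}$ and hence to the $\sigma_n$: $\sigma_n$ is a multiple of $w_0$ for $n\equiv1$, of $\bar w_0$ for $n\equiv3$, of $\omega$ for $n\equiv0$, and $\sigma_n=0$ for $n\equiv2\pmod4$. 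A direct computation pins down the two leading values, $\sigma_1=|c_1|^2w_0$ and $\sigma_2=0$.

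Next I would extract the fixed-frame velocity and the center. Taking $n=1$ in \eqref{system:1}, the coefficient $\lambda_1(\nu)\equiv e^{i\alpha}\sigma_1=|c_1|^2\,w_0e^{i\alpha}$ is independent of $\nu$, so $w_0e^{i\alpha}=\lambda_1/|c_1|^2$ is recovered at once. For the center, the relation $\sigma_2=0$ collapses the $n=2$ equation to $\lambda_2(\nu)=e^{i\alpha}(r-\nu)\sigma_1=(r-\nu)\lambda_1$; when the body translates ($w_0\neq0$, equivalently $\lambda_1\neq0$) this gives $r=\nu+\lambda_2(\nu)/\lambda_1$. If instead $w_0=0$, then every $\sigma_n$ with $n\not\equiv0\pmod4$ vanishes; letting $4m$ be the least index with $\sigma_{4m}\neq0$, the first nonvanishing coefficient $\lambda_{4m}(\nu)\equiv e^{4mi\alpha}\sigma_{4m}$ is constant, while $\sigma_{4m+1}=0$ (class $\equiv1$) reduces the next equation to $\lambda_{4m+1}(\nu)=4m\,(r-\nu)\lambda_{4m}$, again yielding $r$. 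Such an $m$ exists unless rotation produces no potential whatsoever, which by Theorem~\ref{theo:stelth} occurs only for the rotating disk, the shape tacitly excluded from the statement.

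Finally, with $r$ in hand I would evaluate \eqref{system:2} at $\nu=r$, where every term but the top one drops out, to get $\lambda_{4m}(r)=e^{4mi\alpha}\sigma_{4m}=e^{4mi\alpha}\,i\omega\,\mathcal C_{4m}$ with $\mathcal C_{4m}\neq0$ (using that $\sigma_{4m}$ is purely rotational, $\mathcal A_{4m}=\mathcal B_{4m}=0$); taking moduli removes the unknown unit $e^{4mi\alpha}$ and yields $|\omega|=|\lambda_{4m}(r)|/|\mathcal C_{4m}|$. The sign of $\omega$ is genuinely lost here, since the data fix the product $\omega e^{4mi\alpha}$ but not $e^{4mi\alpha}$ alone --- mirroring the fact that the $\pi/2$-symmetry determines $\alpha$ only modulo $\pi/2$ --- so only the modulus survives from this single relation. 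The substantive part, and the step I expect to be the main obstacle, is the second one: establishing the exact support statements for $c$, $f^{-1}$ and $\check c\ast c$ modulo $4$, checking that $w_0$, $\bar w_0$ and $\omega$ occupy distinct residue classes with no cross terms, and verifying the non-degeneracies $|c_1|\neq0$ and $\mathcal C_{4m}\neq0$. Granting that structure, recovering $w_0e^{i\alpha}$, $r$ and $|\omega|$ reduces to inverting the few lowest equations of \eqref{system:1}, the only bookkeeping being the split into the translating and purely rotating cases and the appeal to Theorem~\ref{theo:stelth}.
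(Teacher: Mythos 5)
Your argument is correct and follows essentially the same route as the paper: establish that $\mathcal A_n$, $\mathcal B_n$, $\mathcal C_n$ can be nonzero only for $n\equiv -1,1,0\pmod 4$ respectively, then read off $w_0e^{i\alpha}$ from $\lambda_1$, obtain $r$ from the $n=2$ relation $\lambda_2(\nu)=(r-\nu)\lambda_1$ (or, when $w_0=0$, from the first nonvanishing rotational index), and take moduli to recover $|\omega|$. The only real difference is in how the mod-$4$ structure is derived --- you use the functional equation $if(z)=f(iz)$ coming from uniqueness of the exterior Riemann map, which yields the stronger support condition $c_k=0$ for $k\not\equiv 1\pmod 4$, whereas the paper compares the two expansions \eqref{system:1} and \eqref{new:express} obtained from $c$ and $ic$ and deduces the vanishing of $\mathcal A_n,\mathcal B_n,\mathcal C_n$ directly --- and both arguments rely on the same implicit appeal to Theorem~\ref{theo:stelth} to rule out the stealth cases.
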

\begin{proof}
The assumption on the shape $\mathcal S_0$ means that $f(\Omega)=\tilde f(\Omega)$ where $\tilde f$ is the conformal mapping defined by:
$$\tilde f(z):=\tilde c_1z+\sum_{k\leq 1}\tilde c_kz^k,\quad(z\in\Omega),$$
with $\tilde c_k=e^{i\pi/2}c_k$ for any $k=1$ and $k\leq -1$. Replacing the function $f$ by $\tilde f$ in the computations, we obtain that:
\begin{equation}
\label{new:express}
\lambda_n(\nu)=\sum_{k=1}^n\binom{n-1}{k-1}e^{ik\alpha}(r-\nu)^{n-k}\Big[-\mathcal A_{k}e^{i(k+1)\pi/2}\bar w_0+\mathcal B_{k}w_0e^{i(k-1)\pi/2}+i\omega\mathcal C_{k}e^{ik\pi/2}\Big],
\end{equation}
and the coefficients $\lambda_k(\nu)$, defined equivalently by \eqref{system:1} and by \eqref{new:express}, must be equal for all $r,\nu\in\mathbf C$, $\alpha\in\mathbf R/2\pi$ and all $w_0\in\mathbf C$ and $\omega\in\mathbf R$. In particular, for $r=\nu=0$ and $\alpha=0$, we obtain that $\mathcal A_n(1-e^{i(n+1)\pi/2})w_0-\mathcal B_n(1-e^{i(n-1)\pi/2})\bar w_0=0$  and $\mathcal C_n(1-e^{in\pi/2})\omega=0$ for all $w_0\in\mathbf C$, $\omega\in\mathbf R$ and $n\geq 1$. We deduce that $(\mathcal A_n\neq 0)\Leftrightarrow (n\equiv -1\;[4])$, $(\mathcal B_n\neq 0)\Leftrightarrow (n\equiv 1\;[4])$ and $(\mathcal C_n\neq 0)\Leftrightarrow (n\equiv 0\;[4])$. 

Let us consider the problem of detection now we have some extra information about $\mathcal A_n$, $\mathcal B_n$ and $\mathcal C_n$. Equation \eqref{system:2} with $n=2$, gives $\lambda_1(\nu-r)+\lambda_2(\nu)=0$ for all $\nu\in\mathbf C$. Two cases have to be considered:
\begin{itemize}
\item Either $\lambda_1=0$, which means also that $\lambda_2(\nu)=0$ for all $\nu\in\mathbf C$. It entails, according to equation \eqref{system:2} with $n=1$, that $\mathcal B_1w_0=0$. But $\mathcal B_1=|c_1|^2\neq 0$ and hence $w_0=0$. We get $\omega\neq 0$ otherwise we would have $\xi=0$. Now let  $m$ be the smallest index such that $\mathcal C_m\neq 0$. We know that such an index exists, otherwise it would mean that $\mathcal C_k=0$ for all $k\geq 1$ and hence that any rotational motion of the solid generates a complex potential equal to $0$. This is impossible for a solid not listed in Theorem~\ref{theo:stelth}. By induction on $n$ with equation \eqref{system:2}, we must have now $\lambda_n(\nu)=0$ for all $n<m$. We use then equation \eqref{system:2} with $n=m$ to obtain $i\omega\mathcal C_m=e^{-im\alpha}\lambda_m(\nu)$ and next equation \eqref{system:1} with $n=m+1$ to get $\lambda_{m+1}(\nu)=me^{im\alpha}(r-\nu)i\omega\mathcal C_m$. Combining these two identities, we eventually obtain $\lambda_{m+1}(\nu)=m\lambda_m(\nu)(r-\nu)$, hence we deduce the value of $r$.
\item Or $\lambda_1\neq 0$. In this case, we deduce easily the value of $r$ from the relation $\lambda_1(\nu-r)+\lambda_2(\nu)=0$. 
\end{itemize}
Then, equation \eqref{system:2} with $n=1$ gives us the value of $w_0e^{i\alpha}$. There exists at least one $m>0$ such that $\mathcal C_m\neq 0$. Once again, equation \eqref{system:2} with $n=m$ allows us to deduce the value of $|\omega|$.
\end{proof}
If we have more information on $\mathcal S_0$, we can go further in our study:
\begin{prop}
\label{prop:first:detect:2}
In the preceding proposition, assume furthermore that one of the following assumption is satisfied:
\begin{enumerate}
\item $w_0\neq 0$ and there exists an integer $m$ such that either $\mathcal A_m$ and $\mathcal A_{m+4}$ or $\mathcal B_m$ and $\mathcal B_{m+4}$ are both different from 0;
\item\label{second} $\omega\neq 0$ and there exists an integer $m$ such that $\mathcal C_m$ and $\mathcal C_{m+4}$ are both different from $0$;
\end{enumerate}
Then, the solid is detectable.
\end{prop}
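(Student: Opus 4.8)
The plan is to show that the two hypotheses supply exactly the redundancy needed to remove the two discrete ambiguities left open by Proposition~\ref{prop:first:detect}. Recall that from that proposition we already recover $r$, the fixed-frame linear velocity $w_0e^{i\alpha}$, and $|\omega|$; since the shape is invariant under the rotation of angle $\pi/2$, the position $\mathbf p$ is determined precisely once we know $\alpha$ modulo $\pi/2$, that is, once we know $e^{4i\alpha}$, and the velocity $\mathbf v$ is determined once we additionally fix the sign of $\omega$. So it will suffice to compute $e^{4i\alpha}$ and $\mathrm{sign}(\omega)$. Throughout I would work with the inverted relation \eqref{system:2}, abbreviating $q_n:=\sum_{k=1}^n\binom{n-1}{k-1}(\nu-r)^{n-k}\lambda_k(\nu)$, a quantity that is entirely known (both $r$ and the $\lambda_k(\nu)$ being available) and, since the left-hand side of \eqref{system:2} does not depend on $\nu$, actually $\nu$-independent; then \eqref{system:2} reads $-\mathcal A_n\bar w_0+\mathcal B_nw_0+i\omega\mathcal C_n=e^{-in\alpha}q_n$. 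By the congruences established in the proof of Proposition~\ref{prop:first:detect}, for each $n$ at most one of $\mathcal A_n,\mathcal B_n,\mathcal C_n$ is nonzero according to the class of $n$ modulo $4$, so along a fixed residue class \eqref{system:2} degenerates to a single monomial in the unknowns.

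First I would exploit the two given coefficients to pin down $e^{4i\alpha}$. Under assumption~(1), say with $\mathcal B_m,\mathcal B_{m+4}\neq0$ (so $m\equiv 1\;[4]$), the relation \eqref{system:2} at $n=m$ and $n=m+4$ reduces to $\mathcal B_mw_0=e^{-im\alpha}q_m$ and $\mathcal B_{m+4}w_0=e^{-i(m+4)\alpha}q_{m+4}$; since $w_0\neq0$ and $\mathcal B_m\neq0$, the left member of the first is nonzero, whence $q_m\neq0$, and dividing one relation by the other eliminates $w_0$ and gives
\begin{equation*}
e^{4i\alpha}=\frac{\mathcal B_m}{\mathcal B_{m+4}}\,\frac{q_{m+4}}{q_m}.
\end{equation*}
The case with $\mathcal A_m,\mathcal A_{m+4}$ (so $m\equiv -1\;[4]$) is identical, with $w_0$ replaced by $\bar w_0$. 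Under assumption~(2), with $\mathcal C_m,\mathcal C_{m+4}\neq0$ (so $m\equiv 0\;[4]$) and $\omega\neq0$, the same ratio of the two relations $i\omega\mathcal C_n=e^{-in\alpha}q_n$ eliminates $\omega$ and yields $e^{4i\alpha}$ by the very same formula with $\mathcal C$ in place of $\mathcal B$. In every case we have recovered $e^{4i\alpha}$, hence $\alpha$ modulo $\pi/2$, hence the position $\mathbf p$.

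To fix the sign of $\omega$ I would argue as follows. If $|\omega|=0$ there is nothing to do. Otherwise, since the solid is not one of the stealth bodies of Theorem~\ref{theo:stelth}, there exists an index $n\equiv 0\;[4]$ with $\mathcal C_n\neq0$ (under assumption~(2) one may simply take $n=m$). For such an $n$ the factor $e^{-in\alpha}=(e^{4i\alpha})^{-n/4}$ is now known, so the relation $i\omega\mathcal C_n=e^{-in\alpha}q_n$ determines $\omega$ outright, magnitude and sign together. Combined with the already known $r$ and $w_0e^{i\alpha}$, this furnishes the full configuration $(\mathbf p,\mathbf v)$, so the solid is detectable.

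The only genuinely delicate point, which I would be careful to phrase correctly, is the passage from the product $w_0e^{i\alpha}$ (known) together with $e^{4i\alpha}$ (just computed) back to a bona fide configuration: knowing $e^{4i\alpha}$ fixes $e^{i\alpha}$ only up to a fourth root of unity, and correspondingly $w_0$ up to a factor $i^{-k}$. However, these four alternatives are exactly the four frames related by the $\pi/2$-symmetry of $\mathcal S_0$, and hence describe the very same position $\mathbf p$ and the same fixed-frame velocity, so the ambiguity is harmless. I would therefore state the conclusion in terms of the invariant data $(\mathbf p,\,w_0e^{i\alpha},\,\omega)$ rather than the individual representatives $(\alpha,w_0)$, and I would record explicitly the nonvanishing of the relevant $q_m$ (guaranteed by $w_0\neq0$, resp. $\omega\neq0$, together with $\mathcal B_m\neq0$, resp. $\mathcal C_m\neq0$) that legitimizes each division.
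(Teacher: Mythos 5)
Your argument is correct and follows essentially the same route as the paper's proof: take the ratio of the inverted relations \eqref{system:2} at $n=m$ and $n=m+4$ to isolate $e^{4i\alpha}$ (which, by the $\pi/2$-symmetry, already fixes the orientation), then use an index $n\equiv 0\ [4]$ with $\mathcal C_n\neq 0$ to recover $\omega$ with its sign, the remaining data $r$ and $w_0e^{i\alpha}$ coming from Proposition~\ref{prop:first:detect}. Your write-up is in fact somewhat more careful than the paper's, since you treat both hypotheses explicitly, record the nonvanishing of $q_m$ that legitimizes the division, and explain why the residual fourth-root-of-unity ambiguity in $(\alpha,w_0)$ is absorbed by the symmetry of $\mathcal S_0$.
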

\begin{proof}
We know that we can compute $r$ and $w_0e^{i\alpha}$. Assume for instance that the second assumption holds (it would be the same reasoning with the first one). In this case, with equation  \eqref{system:2} specifying $n=m$ and $n=m+4$, we can compute $\omega e^{im\alpha}$ and $\omega e^{i(m+4)\alpha}$. We next deduce $e^{i4\alpha}$ and hence the orientation of the solid (because of the symmetry property, $e^{i4\alpha}$ suffices to provide the orientation). Since $m\equiv 0$ $[4]$ (because $\mathcal C_m\neq 0$) we next deduce the values of $e^{im\alpha}$ and $\omega$. We know that there exists at least one index $p$ such that $\mathcal A_p\neq 0$. We use equation \eqref{system:2} with $n=p$ to deduce the value of $\mathcal A_p \bar w_0e^{ip\alpha}$ or equivalently $\bar{\mathcal A}_p w_0e^{-ip\alpha}$. Since necessarily $p\equiv -1$ $[4]$ then $-p\equiv 1$ $[4]$ and we can deduce the values of both $\omega$ and $e^{i4\alpha}$ with $w_0e^{i\alpha}$.
\end{proof}
To conclude this section, we give examples of solids without any symmetry, which are detectable (one is pictured in Figure~\ref{fig:32}):
\subsection{Example of detection}
\label{ex:detec}
We consider a shape $\mathcal S_0$ described by a complex sequence $c$ such that $c_1\neq 0$, $c_{-4}\neq 0$ and $c_{-7}\neq 0$, all the other coefficients $c_k$ being null. Direct computations lead to:
$$\begin{array}{|c|c|c|c|c|c|c|c|c|}
\hline
k&1&2&3&4&5&6&7&8\\
\hline
\mathcal A_k&0&0&0&c_1^4c_{-4}&0&0&c_1^7c_{-7}&0\\
\hline
\mathcal B_k&|c_1|^2&0&0&0&0&|c_1|^2c_1^4c_{-4}&0&0\\
\hline
\mathcal C_k&0&0&c_1^3\bar c_{-4}c_{-7}&0&2|c_1|^2c_1^4c_{-4}&0&0&(2|c_1|^2+3|c_{-4}|^2)c_1^7c_{-7}\\
\hline
\end{array}$$
Substiting ing these values into System \eqref{system:2}, we obtain that:
\begin{subequations}
\begin{align}
\mathcal B_1 w_0&= e^{-i\alpha}\lambda_1,\label{eq:1}\\
0&= e^{-i2\alpha}[\lambda_1(\nu-r)+\lambda_2(\nu)],\label{eq:2}\\
\mathcal C_3 i\omega &=e^{-i3\alpha}\left[\sum_{k=1}^3\binom{n-1}{k-1}(\nu-r)^{n-k}\lambda_k(\nu)\right],\label{eq:3}\\
-\mathcal A_4\bar w_0 & = e^{-i4\alpha}\left[\sum_{k=1}^4\binom{n-1}{k-1}(\nu-r)^{n-k}\lambda_k(\nu)\right].\label{eq:4}
\end{align}
\end{subequations}
\begin{itemize}
\item If $\omega\neq 0$ and $w_0\neq 0$: From equation \eqref{eq:1} we deduce that $\lambda_1\neq 0$ and from \eqref{eq:2} we deduce the value of $r$. Equation \eqref{eq:3} allows us to determine $|\omega|$ and $3\alpha$ up to $\pi$. Combining next equation \eqref{eq:1} and \eqref{eq:4}, we get $5\alpha$. Using Bezout's identity: $\alpha=u3\alpha+v5\alpha$ with $u=2$ and $v=-1$, we get $\alpha$. We determine $w_0$ with equation \eqref{eq:1} and $\omega$ with equation \eqref{eq:3}. 
\item If $\omega=0$, $w_0\neq 0$: We compute $r$ as in the preceding case. Then, we need further calculations:
\begin{subequations}
\begin{align}
\mathcal B_6 w_0 &= e^{-i6\alpha}\left[\sum_{k=1}^6\binom{n-1}{k-1}(\nu-r)^{n-k}\lambda_k(\nu)\right],\label{eq:11}\\
-\mathcal A_7\bar w_0 & = e^{-i7\alpha}\left[\sum_{k=1}^7\binom{n-1}{k-1}(\nu-r)^{n-k}\lambda_k(\nu)\right].\label{eq:14}
\end{align}
\end{subequations}
With  \eqref{eq:11} and \eqref{eq:1} we get $5\alpha$ and with  \eqref{eq:14} and \eqref{eq:1} we get $8\alpha$. Since $5$ and $8$ are coprime numbers, we deduce the value of $\alpha$. We conclude as in the preceding case.
\item If $\omega\neq 0$ and $w_0=0$: With \eqref{eq:1} and \eqref{eq:2} we deduce that $\lambda_1=\lambda_2(\nu)=0$ for all $\nu\in\mathbf C$. We rewrite \eqref{eq:3} and \eqref{eq:4} as
\begin{align*}
\mathcal C_3i\omega&=e^{-i\alpha}\lambda_3,\\
0&=\lambda_4(\nu)+3(\nu-r)\lambda_3,
\end{align*}
and we deduce first that $\lambda_3\neq 0$ and then the value of $r$.
We next add the equations:
\begin{subequations}
\begin{align}
\mathcal C_5 i\omega&=e^{-i5\alpha}\left[\sum_{k=1}^5\binom{n-1}{k-1}(\nu-r)^{n-k}\lambda_k(\nu)\right],\label{eq:111}\\
\mathcal C_8 i\omega&=e^{-i8\alpha}\left[\sum_{k=1}^5\binom{n-1}{k-1}(\nu-r)^{n-k}\lambda_k(\nu)\right].\label{eq:112}
\end{align}
\end{subequations}
Equations \eqref{eq:3} and \eqref{eq:111} give us $2\alpha$ and \eqref{eq:111} and \eqref{eq:112} give us $3\alpha$. Since $2$ and $5$ are coprime numbers we get $\alpha$ and then $\omega$ with  \eqref{eq:3}.
\end{itemize}
\begin{figure}[h]     
    \centering
     {\includegraphics[height=.3\textwidth]{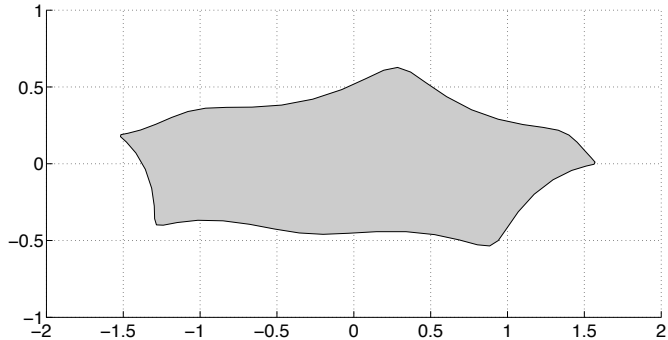}}
     \caption{\label{fig:32}Example of a detectable solid as described in Subsection~\ref{ex:detec}.}
     \end{figure}

\section{Tracking}
\label{tracking}
In this Section, we perform the proof of Theorem~\ref{theo:tracking}. 
So we assume that we know the complex potential for all $t$ in a time interval $[0,T]$ ($T>0$). We have the expression 
\begin{equation}
\label{a:priori:1}
\xi(t,z):=\sum_{j\geq 1}\frac{\lambda_j(t,\nu)}{(z-\nu)^j},\quad |z-\nu|>R(t,\nu),
\end{equation}
where $\lambda_j(t,\nu)$ are complex numbers and $R(t,\nu):=\limsup_{j\to+\infty}|\lambda_j(t,\nu)|^{1/j}$. At any time $t$, the series is uniformly convergent on $\{z\in\mathbf C\,:\, |z-\nu|>R(t,\nu)\}$. For all $N\geq 1$, we denote $\Lambda_N(t,\nu):=(\lambda_1(t,\nu),\ldots,\lambda_N(t,\nu))^T$ and we have, according to the results of the preceding section:
$$\mathcal G_NU(t)=\Theta_N(r(t)-\nu,\alpha(t))^{-1}\Lambda_N(t,\nu),$$
where we recall that $U(t):=(\Re(w_0(t)),\Im(w_0(t)),\omega(t))^T$. In the proof of Proposition~\ref{prop:velo} we have shown that if the solid is not one of those described in Theorem~\ref{theo:stelth}, then there exists $N\geq 1$ such that $\mathcal G_N$ has rank 3. It means that there exists (at least) one inverse $\mathcal G_N^{-1}$ allowing one to express the velocity as:
$$U(t):=\mathcal G_N^{-1}\Theta_N(r(t)-\nu,\alpha(t))^{-1}\Lambda_N(t,\nu).$$
We next get:
$$\begin{pmatrix}
e^{i\alpha(t)}w_0(t)\\
\omega
\end{pmatrix}
=\begin{pmatrix}
e^{i\alpha(t)}&ie^{i\alpha(t)}&0\\
0&0&1\end{pmatrix}\mathcal G_N^{-1}\Theta_N(r(t)-\nu,\alpha(t))^{-1}\Lambda_N(t,\nu).$$
This equation can be rewritten as:
$$\frac{d}{dt}\begin{pmatrix}
r(t)\\
\alpha(t)\end{pmatrix}=\begin{pmatrix}
e^{i\alpha(t)}&ie^{i\alpha(t)}&0\\
0&0&1\end{pmatrix}\mathcal G_N^{-1}\Theta_N(r(t)-\nu,\alpha(t))^{-1}\Lambda_N(t,\nu),$$
to which we can apply the Cauchy-Lipschitz Theorem. The proof is then completed.
\section{Conclusion}
\label{open}
In this article, we have proved that not all solids moving in a perfect fluid can be detected by measuring the potential of the fluid. This observation has led us to define the notion of {\it detectable} solids, which is a purely geometric property. When the geometry is described by means of a conformal mapping, we were able to exhibit examples of detectable (or partially detectable) solids. However, the complete characterization of such solids in terms of the complex sequence $(c_k)_{k\in\mathbf Z}$ remains to be done.
\bibliographystyle{abbrv}
\bibliography{detection1}

\begin{thebibliography}{1}

\bibitem{Alvarez:2005aa}
C.~Alvarez, C.~Conca, L.~Friz, O.~Kavian, and J.~H. Ortega.
\newblock Identification of immersed obstacles via boundary measurements.
\newblock {\em Inverse Problems}, 21(5):1531--1552, 2005.

\bibitem{Call:1993aa}
G.~S. Call and D.~J. Velleman.
\newblock Pascal's matrices.
\newblock {\em Amer. Math. Monthly}, 100(4):372--376, 1993.

\bibitem{Conca:2008ab}
C.~Conca, P.~Cumsille, J.~Ortega, and L.~Rosier.
\newblock On the detection of a moving obstacle in an ideal fluid by a boundary
  measurement.
\newblock {\em Inverse Problems}, 24(4):045001, 18, 2008.

\bibitem{Doubova:2007aa}
A.~Doubova, E.~Fern{\'a}ndez-Cara, and J.~H. Ortega.
\newblock On the identification of a single body immersed in a
  {N}avier-{S}tokes fluid.
\newblock {\em European J. Appl. Math.}, 18(1):57--80, 2007.

\bibitem{Heck:2007aa}
H.~Heck, G.~Uhlmann, and J.-N. Wang.
\newblock Reconstruction of obstacles immersed in an incompressible fluid.
\newblock {\em Inverse Probl. Imaging}, 1(1):63--76, 2007.

\bibitem{Milne-Thomson:1960aa}
L.~M. Milne-Thomson.
\newblock {\em Theoretical hydrodynamics}.
\newblock 4th ed. The Macmillan Co., New York, 1960.

\bibitem{Rudin:1987aa}
W.~Rudin.
\newblock {\em Real and complex analysis}.
\newblock McGraw-Hill Book Co., New York, 3rd edition, 1987.

\end{thebibliography}
\end{document}